\definecolor{darkgreen}{rgb}{0,0.5,0}
\newcommand{\Q}{\mathbb{Q}}
\renewcommand{\O}{\mathcal{O}}
\newcommand{\Z}{\mathbb{Z}}
\newcommand{\F}{\mathbb{F}}
\newcommand{\Frob}{\textrm{Frob}}
\newcommand{\seta}{\mathcal{A}}
\newcommand{\setz}{\mathcal{Z}}
\newcommand{\cycp}{\overline{p}}
\newcommand{\PP}{\mathbb{P}}
\newcommand{\Zip}{\Z[1/p]}
\DeclareMathOperator{\Res}{Res}
\newcommand{\Qp}{\Q_p}
\newcommand{\Qpb}{\bar{\Q}_p}
\newcommand{\dr}{\textup{dR}}
\newcommand{\hdr}{H_{\dr}}
\newcommand{\DD}{U}
\newcommand{\calF}{\mathcal{F}}
\newcommand{\XX}{\mathcal{X}}
\newcommand{\UU}{\mathcal{U}}
\newcommand{\KK}{\mathcal{K}}
\newcommand{\TT}{\mathcal{P}}
\newcommand{\bom}{\bar{\omega}}
\newcommand{\ord}{\operatorname{ord}}
\newcommand{\Div}{\operatorname{Div}}
\newcommand{\Spec}{\operatorname{Spec}}
\newcommand{\disc}{\operatorname{disc}}
\renewcommand{\div}{\operatorname{div}}
\newtheorem{theorem}{Theorem}[section]
\newtheorem{proposition}[theorem]{Proposition}
\newtheorem{lemma}[theorem]{Lemma}
\newtheorem{corollary}[theorem]{Corollary}
\theoremstyle{definition}
\newtheorem{definition}[theorem]{Definition}
\newtheorem{problem}{Problem}
\newtheorem{algo}[theorem]{Algorithm}
\newtheorem{remark}[theorem]{Remark}
\newtheorem{example}[theorem]{Example}
\numberwithin{equation}{section}
\date{\today}
\begin{document}
 \title[Computing integral points on hyperelliptic curves]{Computing integral points on
 hyperelliptic curves using quadratic Chabauty}
\author{Jennifer S. Balakrishnan}
\address{Jennifer S. Balakrishnan, Mathematical Institute, University of Oxford, Woodstock Road, Oxford OX2 6GG, UK}

\author{Amnon Besser}
 \address{Amnon Besser, Department of Mathematics\\Ben-Gurion University of the Negev\\P.O.B. 653\\Be'er-Sheva 84105\\Israel}

\author{J. Steffen M\"{u}ller}
\address{J. Steffen M\"{u}ller, Institut f\"ur Mathematik, Carl von Ossietzky
Universit\"{a}t Oldenburg, 26111 Oldenburg, Germany}

\begin{abstract}
    We give a method for the computation of integral points on a hyperelliptic curve of
    odd degree over the rationals whose genus equals the  Mordell-Weil rank of its
    Jacobian.
    Our approach consists of a combination of the $p$-adic
    approximation techniques introduced in previous work with the
    Mordell-Weil sieve.
\end{abstract}
\subjclass[2010]{Primary 11G30; Secondary 11S80, 11Y50, 14G40}
\maketitle


\section{Introduction} \label{sec:intro}

Let $f\in\Z[x]$ be a separable polynomial of
degree at least 3. The problem of determining all integer solutions $(x,y)$ to the equation $$y^2=f(x)$$ is classical in nature and has been studied from several different viewpoints, yielding a toolbox of approaches.  Techniques from Diophantine approximation, which make use of \emph{archimedean} analysis, include linear forms in elliptic logarithms, $S$-unit equations, and Baker's method.  We discuss each of these techniques and their practical limitations -- usually arising from astronomical height bounds or working with number fields of large degree -- in Section~\ref{sec:previous} below.  

In the realm of \emph{non-archimedean} analysis, perhaps the most celebrated technique in the study of integral points is Coleman's interpretation \cite{Col85a} of the method of Chabauty \cite{Chab41}, allowing one to determine the \emph{rational} points on a curve whose Jacobian has Mordell-Weil rank less than its genus.  This is accomplished by computing a $p$-adic line integral, among whose zeros lie the rational points of the curve. The hypothesis that the rank is less than genus is essential, and in practice, it is perhaps more difficult to check this hypothesis than to carry out the actual construction of the $p$-adic integral.

Over the last decade, Kim has initiated an exciting program aimed at removing this restricting on rank, allowing the study of rational points on hyperbolic curves through the use of nonabelian geometric objects generalizing the role of the Jacobian in the Chabauty-Coleman method \cite{Kim05, bdckw}. Through delicate calculations in $p$-adic Hodge theory, this nonabelian Chabauty method can produce iterated $p$-adic integrals playing the role of the abelian integrals arising from the Jacobian of the curve \cite{kim10,BKK11}.

In the spirit of Kim's program, we previously gave a method~\cite{BBM14} based on $p$-adic height pairings to $p$-adically approximate the set
of integral points in the case when the curve has Jacobian with Mordell-Weil rank \emph{equal to} its genus. To state this more precisely, let us introduce some notation.  Consider the genus $g$ hyperelliptic curve $X$ which has an affine model given by the equation $y^2 = f(x)$ with $\deg f = 2g+1$, and let $J$ denote its Jacobian. Fix an odd prime $p$ such that $X$ has good reduction at $p$.   Define locally analytic functions $f_i \colon X(\Qpb)\to \Qpb$ by
\begin{equation*}
  f_i(z) = \int_\infty^z  \frac{x^i\,dx}{2y}, \qquad 0 \leq i \leq g - 1
\end{equation*}
and extend them linearly to functionals 
\[
  f_i \colon J(\Q) \otimes \Q \to \Q_p.
\]
The method of Chabauty and Coleman can be interpreted as follows: If the Mordell-Weil rank of $J(\Q)$ is less than $g$, then one can construct a function given by a linear combination of the $f_i$ on $J(\Q_p)$ that vanishes on $J(\Q)$. 
By restricting this function to $X(\Q_p)$, one can approximate the points $X(\Q)$.

In~\cite{BBM14} we showed that under the hypotheses that $J$ has ordinary reduction at $p$, the Mordell-Weil rank of $J(\Q)$ is equal to $g$ and the $f_i$ are linearly independent functionals on $J(\Q) \otimes \Q$, one can construct a locally analytic function $\rho$ that takes on a prescribed, finite set of constants $T$ on integral points of $X$. By setting $\rho$ equal to each of these values, we find, among the set of $p$-adic points, the integral points on the working affine model of $X$. The function $\rho$ arises naturally from a local decomposition of the global $p$-adic height pairing, a natural quadratic form on the Jacobian; we refer to this method of finding integral points as \emph{quadratic Chabauty}.

In this work, we show how quadratic Chabauty can be used in practice to provably find
all integral points on $X$ by combining it with the Mordell-Weil sieve.
Since $\rho$ can be written as a convergent power series on every residue disk, we can
explicitly determine the finitely many  solutions over $\Z_p$ to the equations
\begin{equation}\label{eq:rhot}
  \rho(z) = t,
\end{equation}
up to some finite precision $p^N$, as $t$ runs through $T$.
For this computation, we need to relate the global $p$-adic height to a natural basis of quadratic forms on $J(\Q) \otimes \Q$, compute double Coleman integrals describing the local height contribution at $p$, and calculate the arithmetic intersections describing the local height contributions away from $p$.  We briefly discussed how to compute these quantities in~\cite{BBM14}; there we also presented two examples. In this paper, we give a full algorithm describing the necessary computations and provide an analysis of the $p$-adic precision which must be maintained throughout the computation. We furthermore detail how to combine quadratic Chabauty with the Mordell-Weil sieve to precisely find the set of integral points. 

We note that the  algorithms for computing integral points on elliptic curves over $\Q$ are rather
well-developed and perform particularly well in the case of rank 1 (see
Section~\ref{sec:previous}), so we have just a few remarks here, combining quadratic Chabauty with information about the
group of $\F_q$-rational points for primes $q$ of good reduction.  This method is treated
in Appendix~\ref{sec:elliptic}.

In this paper, we focus on a method for curves having genus at least~2. 
Here the existing approaches usually require either the rank $r$ of $J(\Q)$ to be strictly
smaller than $g$ or the availability of a set of Mordell-Weil generators, which currently
is only possible for $g \le 3$; see Section~\ref{sec:previous}.  Our approach here, combining quadratic Chabauty with the Mordell-Weil sieve, 
thus presents the first systematic and practical method to compute integral points for
curves $X$ as above with $r = g \ge 4$.

In broad terms, the idea is as follows: We first find the integral points of small height;
our goal is to show that this set already contains all integral points.
Then we apply quadratic Chabauty using several primes, producing an integer $M$ and a list of
residue classes in $J(\Q)/MJ(\Q)$.  We want to show that each of these residue classes cannot contain the
image of a rational point on $X$. We accomplish this by applying  the Mordell-Weil sieve.  This method,
pioneered by Scharaschkin  \cite{Scharaschkin:Thesis},  combines information modulo several primes $v$ of good reduction by finding
the image of $X(\F_v)$ inside $J(\F_v)/MJ(\F_v)$.

The idea of using $p$-adic techniques, notably Chabauty's method, to come up with
congruence conditions and hence suitable lists of residue classes for the Mordell-Weil
sieve is not new, see~\cite{Bruin-Elkies:trinomials} and, in particular~\cite{PSS:Twists}, which also uses
congruence conditions modulo different primes.
In the present work, however, we choose the primes $v$ in the Mordell-Weil sieve computation and the
primes used for quadratic Chabauty {\em at the same time}, making it possible 
to keep the number of residue classes and/or the number (and size) of the primes $v$
comparatively small.
This is necessary, because we want to consider examples of large genus (and, hence, large
rank), and one typically needs to find about $v$ discrete logarithms in $J(\F_v)$, which
becomes slow for large $v$ and $g$.
See Example~\ref{g4}, where we use this approach to find the integral points on a curve of
genus~4; here $v=317$ is the largest prime we had to consider.

The structure of this paper is as follows: In Section~\ref{sec:overviewQC}, we review the method of quadratic Chabauty. In Section~\ref{sec:computing} we give a more detailed description of the algorithms needed to compute
the quantities in quadratic Chabauty and to solve equation \eqref{eq:rhot}. In Section~\ref{sec:cliffhanger}, we discuss some practical considerations arising from the method.  We present the Mordell-Weil sieve and explain how to apply it in our situation in
Sections~\ref{sec:mw-sieve}-\ref{sec:applying_mws}. This allows us to describe a complete method for the computation of integral points when $X$
satisfies $r=g>1$ in Section~\ref{sec:algorithm}.
Finally, we mention a few examples in Section~\ref{sec:examples}, one of which has genus~4
and rank~4 and is not amenable to previous algorithms.

Most of our algorithms can be generalized to hyperelliptic curves defined over number
fields. 
This is work in progress.

\subsection{Other approaches} \label{sec:previous}
We discuss here how our method compares to other approaches in the literature. For completeness, we also give an overview of existing methods in the case of genus 1 as well.

The most efficient technique for the computation of integral points on elliptic curves relies on linear forms in
elliptic logarithms.
These can be used to compute an upper bound on the height of integral
points~\cite{David95, HKK}, or, equivalently, on the size of the coefficients of an
integral point in terms of a given set of Mordell-Weil generators.
This bound can then be reduced using the LLL-algorithm, see for
instance~\cite{Smart:S-integral, PZGH99, Stroeker-Tzanakis}; frequently, the resulting bound is small enough to find all integral
points simply by searching up to this bound.
The computer algebra systems {\tt Sage}~\cite{sage} and {\tt Magma}~\cite{magma} contain
an implementation of this algorithm.

Quadratic Chabauty can be combined with this approach, as it can be used to provide a
lower bound on the height of all integral points that have not been found yet.
However, here we are restricted to the rank~1 case, where usually a generator of the
Mordell-Weil group modulo torsion is easily computed and the upper bounds after
LLL-reduction are small enough so that only a few multiples of the generator have to be
computed to provably find all integral points.

For genus greater than 1, no analogue of the algorithm based on elliptic logarithms is known. 
We briefly summarize the existing algorithms. 
More detail can be found in the introduction to~\cite{BMSST08};  see also~\cite{Smart98}.

The method of Chabauty-Coleman~\cite{Col85a} is a $p$-adic method arising from Coleman's reinterpretation of Chabauty's
theorem~\cite{Chab41}; see~\cite{PMC} for an introduction to the method as well as some further variations. It can often be used to compute $X(\Q)$ in practice,
provided $g$ is greater than the rank of $J(\Q)$.
It can also be combined with the Mordell-Weil sieve  \cite{Bruin-Stoll:MWSieve}, which we
discuss in detail in Section~\ref{sec:mw-sieve}. On its own, the Mordell-Weil sieve is
most useful to show that a given curve has {\em no} rational points, which is never the
case for our curves.

Another class of approaches to the computation of $X(\Q)$ is based on coverings of $X$, see for
instance~\cite{Flynn:Coverings, Bruin-Stoll:Descent}.
These require computations of the class group and unit group of number fields of potentially large
degree. 
In practice, such methods often combine the use of coverings with \emph{elliptic
curve Chabauty} \cite{Bruin:EllipticChabauty,Flynn-Wetherell:Challenge}, which requires the computation of Mordell-Weil groups of elliptic curves over
number fields of large degree. Such computations can be prohibitively expensive in practice, but sometimes work rather well for reasonably small
genus, rank and coefficients.

There are also methods which can be used to provably find all {\em integral} points.
These are based on Baker's method,  which can be used to compute an upper bound on the
height of integral points on $X$, which provides a theoretical algorithm for the
computation of the integral points.
While Baker's original bounds were too large to be of much practical use, a number of
authors have produced substantial improvements; see \cite[$\S 1$]{BMSST08} for an overview.
Some algorithms based on Baker's method are given in~\cite{Smart98}.

In~\cite{BMSST08}, a refinement of the Mordell-Weil sieve is discussed, which can be
used to show that any rational points on $X$ which have not been found yet must have
extremely large height.
It is possible to obtain lower bounds around $10^{2000}$ in this way.
The authors of {\em loc. cit.} also produce explicit upper bounds on the height of
integral points.
These are often good enough to show that all integral points have been found when combined
with the lower bounds.
The main obstacle is that for the refinement of the Mordell-Weil sieve one needs an
explicit set of generators of the Mordell-Weil group $J(\Q)$.
In practice, this is only possible when $g\le 3$, see~\cite{Flynn-Smart, Stoll:H1,
Stoll:H2,Mueller-Stoll, Stoll:KummerG3}.

The method we present here in Section~\ref{sec:algorithm} is applicable whenever we can
prove that the rank is equal to the genus and we have explicit generators of a subgroup of
$J(\Q)/J(\Q)_{\mathrm{tors}}$ of finite index.

\section*{Acknowledgements}
We would like to thank Nils Bruin, Tzanko Matev and Michael Stoll for
helpful discussions regarding the Mordell-Weil sieve and Michael Stoll for many helpful
comments on a previous version of this paper.
The second author was supported by Israel Science Foundation grant No. 1517/13. The third author was supported by DFG grant KU~2359/2-1.  


\section{Overview of quadratic Chabauty}\label{sec:overviewQC}
In this section, we give an overview of the method of quadratic Chabauty \cite{BBM14} for hyperelliptic curves and describe the $p$-adic analytic functions that allow us to find integral points.  Let $f\in\Z[x]$ be a separable polynomial of
degree $2g+1 \ge 3$.  The equation \[y^2=f(x)\] defines a hyperelliptic curve $X$ of genus $g$ over $\Q$
as follows:
Let $$\UU=\Spec(\Z[x, y] / (y^2 - f(x))).$$ Then $X$ is the normalization of the
projective closure of the generic fiber of $\UU$; it contains a unique point $\infty$ at infinity, and this point
is $\Q$-rational. We denote by $w \colon X\to X$ the hyperelliptic involution on $X$ and by
$\iota$ the embedding of $X$ into its Jacobian $J$ which maps a point $P$ to the class of
$(P) - (\infty)$.  Fix a branch of the $p$-adic logarithm, which we denote as $\log_p$.

We assume that $p$ is a prime of good reduction for
$X$. For a finite place $v$ of $\Q$ and divisors $D_1$
and $D_2$ on $X(\Q_v)$ of degree zero 
with disjoint support, let $$h_v(D_1,D_2) \in \Q_p$$ denote the local $p$-adic height
pairing at $v$ defined by Coleman and Gross~\cite{Col-Gro89} with respect to a direct sum decomposition of $\hdr^1(X\times \Q_p)$:

\begin{equation}\label{eq:decompi}
  \hdr^1(X\times \Q_p) = W\oplus \hdr^0(X \times \Q_p, \Omega^1_{X\times \Q_p}).
\end{equation}
The local $p$-adic height pairing is symmetric if and only if $W$ is isotropic with respect to the cup product pairing.  When $p$ is a prime of good \emph{ordinary} reduction, we take $W$ to be the unit root subspace for the action of Frobenius.

For $v \ne p$, the symmetric bilinear pairing $h_v(D_1,D_2)$ can be defined in terms of arithmetic intersection theory on a
regular model of $X\times\Q_v$ over $\Z_v$, while for $v=p$ it is given by a Coleman integral
along $D_2$ of a certain differential of the third kind with residue divisor $D_1$.

If $D_1$ and $D_2$ are defined over $\Q$, then the sum
\begin{equation*}
  h(D_1,D_2) = \sum_v h_v(D_1,D_2)
\end{equation*}
of local height pairings over the finite places $v$ of $\Q$ respects linear equivalence.
It induces the global Coleman-Gross $p$-adic  height pairing
\begin{equation*}
  h \colon J(\Q) \times J(\Q) \to \Qp.
\end{equation*}
This is a symmetric bilinear pairing, which is conjectured to be
nondegenerate.

In~\cite{Bes12}, it was shown that the restriction that 
$D_1$ and $D_2$ must have disjoint support can be removed 
by extending the local height pairing relative to a choice of tangent vectors. 
This theme was further explored in \cite{BBM14}, where we used a certain consistent choice of tangent vectors to study local heights.
The resulting local height pairing at $p$ gives 
rise to a function $$\tau(z)= h_p((z)-(\infty),(z)-(\infty))$$ defined on $X(\Q_p)$;
we can then extend $\tau$ to a function on $X(\Qpb)$.
This function turns out to be a Coleman function in the sense of~\cite{Bes00} and,
by~\cite[Theorem~2.2]{BBM14}, a double Coleman integral.
Hence it can be written as a convergent $p$-adic power series on every residue disk.

We define locally analytic functions $f_i \colon X(\Qpb)\to \Qpb$ by
\begin{equation*}
  f_i(z) = \int_\infty^z \omega_i, \qquad 0 \leq i \leq g-1
\end{equation*}
where $\omega_i=\frac{x^idx}{2y}$.
These $f_i$ can be extended linearly to functionals 
\[
  f_i \colon J(\Qpb) \otimes \Q \to \Qpb.
\]
Because Coleman integrals are compatible with the action
of the absolute Galois group of $\Q_p$, the restrictions of these functionals to
$J(\Q)\otimes\Q$, also denoted by $f_i$, are $\Q_p$-valued.

From now on, we make the assumption that $f$ does not reduce to a square modulo $q$ for any
prime number $q$.
In~\cite{BBM14} we proved the following result as Theorem~3.1:
\begin{theorem}\label{cutting-function}
 Suppose that the Mordell-Weil rank of $J(\Q)$ is $g$ and that the $f_i$ are linearly
 independent as functionals on $J(\Q)\otimes \Q$.
  Then there exist constants $\alpha_{ij}\in \Q_p$ such that the locally analytic function
  \begin{equation}\label{rhofunction}
    \rho(z) = \tau(z) - \sum_{0 \le i\le j \le g-1} \alpha_{ij} f_i(z)f_j(z)
  \end{equation}
  takes values on $\UU(\Zip)$ in an effectively computable  finite set $T\subset \Q_p$.
\end{theorem}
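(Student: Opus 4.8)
The plan is to exploit the fact that the global $p$-adic height $h \colon J(\Q)\times J(\Q)\to\Qp$ is a symmetric bilinear pairing and hence, when restricted to the diagonal and composed with $\iota$, is a quadratic form on the finite-rank $\Z$-module $J(\Q)$. First I would fix a basis of the $\Q_p$-vector space of quadratic forms on $J(\Q)\otimes\Q$; since the rank is $g$ by hypothesis and the $f_i$ are linearly independent functionals, the $g(g+1)/2$ products $f_i f_j$ for $0\le i\le j\le g-1$ span this space (and in fact form a basis, as the space of quadratic forms on a $g$-dimensional space has dimension $g(g+1)/2$). Therefore the quadratic form $P\mapsto h(\iota(P),\iota(P))$ can be written as $\sum_{i\le j}\alpha_{ij} f_i(\iota(P)) f_j(\iota(P))$ for uniquely determined $\alpha_{ij}\in\Q_p$, and these are the constants in the statement.

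Next I would analyze $\rho(z)=\tau(z)-\sum_{i\le j}\alpha_{ij}f_i(z)f_j(z)$ on an integral point $z\in\UU(\Zip)$ using the decomposition $h(\iota(z),\iota(z))=\sum_v h_v((z)-(\infty),(z)-(\infty))$ over the finite places of $\Q$. By the choice of $\alpha_{ij}$, the sum over all $v$ equals $\sum_{i\le j}\alpha_{ij}f_i(z)f_j(z)$, and the term at $v=p$ is exactly $\tau(z)$ by the definition of $\tau$. Hence
\[
  \rho(z) = -\sum_{v\ne p} h_v((z)-(\infty),(z)-(\infty)).
\]
So I must show that the right-hand side ranges over a finite, effectively computable set as $z$ runs over $\UU(\Zip)$. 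For $v\ne p$, the local height $h_v$ is computed from arithmetic intersection multiplicities on a regular model of $X\times\Q_v$ over $\Z_v$; the key point is that $z$ being a $\Zip$-point means it is $v$-integral for every $v\ne p$, so the divisor $(z)-(\infty)$ meets the component structure of the special fiber only in restricted ways. This forces $h_v((z)-(\infty),(z)-(\infty))$ to lie in a finite set depending only on the reduction type at $v$, and to vanish for all but the finitely many $v$ of bad reduction (since at a place of good reduction a $v$-integral point and $\infty$ reduce to smooth points and the intersection pairing degenerates). Summing over the finitely many bad $v$ and enumerating the finitely many possible values at each gives the desired finite set $T$, and each piece is computable from an explicit regular model, hence $T$ is effectively computable.

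The main obstacle is the local analysis at the bad primes $v\ne p$: one needs to check that, as $z$ varies over all $v$-integral points, the intersection-theoretic local height $h_v((z)-(\infty),(z)-(\infty))$ takes only finitely many values. This rests on the fact that $\infty$ specializes to a fixed component of the special fiber and that a $v$-integral point $z$ specializes to a point on the special fiber that is disjoint from (or has bounded intersection with) the other components, together with the compatibility of the extended local height (relative to the consistent choice of tangent vectors from~\cite{Bes12, BBM14}) with the naive intersection pairing. Making this precise requires recalling how the local height at $v\ne p$ decomposes via the Néron model / regular model and bounding the relevant intersection numbers; the rest of the argument — that the $f_if_j$ form a basis and that the global height is a quadratic form — is essentially formal.
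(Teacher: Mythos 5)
Your proposal follows essentially the same route as the paper: use the rank-$g$ hypothesis and linear independence of the $f_i$ to write the global height quadratic form as $\sum_{i\le j}\alpha_{ij}f_if_j$, deduce $\rho(z)=-\sum_{q\ne p}h_q((z)-(\infty),(z)-(\infty))$ on $\UU(\Zip)$, and then get finiteness and effectivity from the intersection-theoretic description of the local heights away from $p$ on a regular model, where the value depends only on the component of the special fiber met by the section and vanishes at primes of good reduction. This matches the paper's sketch (and its Lemma on the sets $T(q)$), so the proposal is correct and not materially different.
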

\begin{remark}In the statement of~\cite[Theorem~3.1]{BBM14} we incorrectly required only that no
linear combination of the $f_i$ vanished on $\UU(\Zip)$, but what is actually needed is the stronger
independence assumption stated here, which was indeed used in the proof of the theorem.\end{remark}
\begin{remark} 
  Note that we can apply a variant of the method of Chabauty-Coleman
if a linear combination of the $f_i$ vanishes on $\UU(\Zip)$,
 so the ``quadratic Chabauty'' situation described in the theorem is really the novel case.\end{remark}

We give a sketch of the proof of Theorem~\ref{cutting-function}.  
Because the rank is $g$ and the $f_i$ are linearly independent, the set
\begin{equation*}\left\{\frac{1}{2}(f_i f_j + f_j f_i)\right \}_{0 \leq i
  \leq j \leq g-1}\end{equation*} forms a basis of the space of $\Q_p$-valued bilinear
  forms on $J(\Q) \otimes \Q$. 
The global $p$-adic height pairing $h$ is also a $\Q_p$-valued bilinear form on $J(\Q) \otimes \Q$, so 
there are constants $\alpha_{ij}\in \Q_p$ such that 
\begin{equation}\label{eq:h_sum}
  h((P)-(\infty),(P)-(\infty)) = \sum_{i\le j} \alpha_{ij} f_i(P)f_j(P)
\end{equation}
for all $P \in X(\Q)$.

The right hand side of~\eqref{eq:h_sum} extends to a locally analytic $\Qpb$-valued function on
$X(\Qpb)$.
It then only remains to show that 
\[
  \rho(z) := \tau(z) - \sum_{i\le j} \alpha_{ij} f_i(z)f_j(z) 
\]
can only take values on $\UU(\Zip)\subset X(\Qpb)$ in an effectively computable  finite
subset of $\Q_p$. 
For this, we analyze the possible values of the local height pairings
$h_q((P)-(\infty),(P)-(\infty))$ for $q \ne p$ and $P \in \UU(\Zip)$, and use that by construction
\begin{equation}\label{eq:rho_away}
  \rho(P) = -\sum_{q \ne p}h_q((P)-(\infty), (P)-(\infty))
\end{equation}
for $P \in X(\Q)$.

To ease notation, we will only deal with a single prime $q$ and omit it from the
notation whenever possible.
Following~\cite[\S3]{BBM14}, we let $\XX$ denote a strong desingularization 
of the Zariski closure $\XX'$ of $X \times \Q_q$ over $\Spec(\Z_q)$. 
By the latter, we mean the scheme defined in the weighted projective plane 
$\PP^2_{\Z_q}(1,\,g+1,\,1)$ by the equation
\[
Y^2 = F(X,Z),
\]
where $F(X, Z)$ is the degree $2g+2$-homogenization of $f$.
By abuse of notation, the section in $\XX(\Z_q)$ corresponding to a point $P \in X(\Q_q)$
will also be denoted by $P$.
Our assumptions on $f$ guarantee that $\XX'$ is normal with irreducible and reduced special fiber.
Hence there is a unique component  of
the special fiber of $\XX$ which dominates the special fiber
of $\XX'$, 
and any $P \in X(\Q_q)$  whose reduction modulo $q$ is nonsingular has
the property that the section $P$ intersects this component.
Let $(\;\,.\;\,)$ denote the rational-valued intersection multiplicity on $\XX$.
For a divisor $Z$ on $X\times \Q_q$ of degree zero, we let $\Phi(Z)$ denote a vertical
$\Q$-divisor on $\XX$ such that $Z +\Phi(Z)$ has intersection multiplicity~0 with all vertical
divisors on $\XX$; such a $\Q$-divisor always exists by~\cite[Theorem~III.3.6]{Lan88}.
Then, by the proof of~\cite[Proposition~3.3]{BBM14}, we have
\begin{equation}\label{E:hqdp}
  h_q((P)-(\infty), (P)-(\infty)) = -(D_{P}\,.\,D_P)\log_p(q),
\end{equation}
where 
\begin{equation}\label{E:dp}
  D_{P} =(P) -( \infty) +\Phi((P)-(\infty))\in \Div(\XX)\otimes \Q,
\end{equation}
with the abuse of notation introduced above.
Here the self-intersection is defined with respect to the choice of tangent vectors
mentioned above; see the proof of~\cite[Proposition~3.3]{BBM14} and~\cite[\S5]{Gross86} for further details.

Let $T(q)\subset \Q$ denote the set of all self-intersections $D_P^2 = (D_P\,.\,D_P)$, where $P$ runs
through $\UU(\Z_q)$.
\begin{lemma}\label{L:tdec}
  \begin{enumerate}[(i)]
    \item The intersection multiplicity $D_P^2$ only depends on the fiber of $\XX_q$ which
      the section $P$ intersects.
    \item If  $T(q) \ne \{0\}$, then the special fiber
      $\XX_q$ is reducible or $q$ divides the leading coefficient of $f$. In particular,
      $T(q) =\{0\}$ if $q$ is a prime of good reduction.
\item If $q_1,\ldots,q_\ell$ are the primes $q$ for which $T(q) \ne \{0\}$, then
  \[
    \rho(\UU(\Zip)) \subset \left\{\sum^\ell_{i=1} d_i\log_p(q_i)\,:\, d_i \in T(q_i)\right\}.
  \]
  \end{enumerate}
\end{lemma}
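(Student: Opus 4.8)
The plan is to establish the three parts of Lemma~\ref{L:tdec} in order, using the description of $h_q$ via self-intersections from~\eqref{E:hqdp} together with the structure of the model $\XX$ and its special fiber $\XX_q$.

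First I would prove (i). The divisor $D_P = (P) - (\infty) + \Phi((P)-(\infty))$ has $D_P^2$ computed as a sum of local intersection multiplicities. Since $(\infty)$ is a fixed section and $\Phi((P)-(\infty))$ is a $\Q$-linear combination of the vertical components of $\XX_q$, the only data about $P$ that enters is which vertical component the section $P$ meets — equivalently, the fiber of $\XX_q$ that $P$ intersects (for $q$ the prime under consideration, this is really a question of which component of the reducible fiber). Concretely: $(P \,.\, P)$ (with the tangent-vector convention) is computed from the self-intersection of the component $P$ passes through, adjusted by the tangent direction, and this is governed by Gross's formula as in~\cite[\S5]{Gross86}; the cross terms $(P\,.\,(\infty))$ and $(P\,.\,\Phi(\cdots))$ only see the component of $P$; and the coefficients in $\Phi((P)-(\infty))$ are determined by solving a linear system whose right-hand side depends only on which component $P$ meets. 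So $D_P^2$ is a function of that component alone.

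Next, part (ii). If $\XX_q$ is irreducible (and $q$ does not divide the leading coefficient of $f$, so that $\XX'$ is already regular along the special fiber and $\XX = \XX'$ with irreducible special fiber), then there is a unique vertical component, the section $(\infty)$ and every section $P \in \UU(\Z_q)$ meet it, and the "correction" divisor $\Phi((P)-(\infty))$ is forced to be a multiple of the whole fiber — but the whole fiber has intersection number zero with every section, so effectively $D_P$ is linearly equivalent to $(P)-(\infty)$ with no vertical part and, by the standard computation on a regular model with a section through the smooth locus, $D_P^2 = 0$. Hence $T(q) = \{0\}$. The contrapositive is the stated claim; and since $q$ of good reduction has irreducible (indeed smooth) special fiber and does not divide the leading coefficient (our running hypothesis that $f$ is not a square mod $q$ already rules out the bad cases, but good reduction is the clean sufficient condition), we get $T(q) = \{0\}$ for all such $q$.

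Finally, part (iii). By~\eqref{eq:rho_away}, for $P \in \UU(\Zip) \cap X(\Q)$ we have $\rho(P) = -\sum_{q\neq p} h_q((P)-(\infty),(P)-(\infty))$, and by~\eqref{E:hqdp} each term equals $(D_P\,.\,D_P)_q \log_p(q) = d_q \log_p(q)$ with $d_q \in T(q)$. By part (ii), $T(q) = \{0\}$ except for $q \in \{q_1,\dots,q_\ell\}$, so the sum collapses to $\sum_{i=1}^\ell d_i \log_p(q_i)$ with $d_i \in T(q_i)$; since each $T(q_i)$ is finite (finitely many components in $\XX_{q_i}$, hence finitely many values by part (i)), this is a finite set, and $\rho(\UU(\Zip))$ — a priori the closure of $\rho$ on $\Q$-points since $\rho$ is locally analytic and $\UU(\Z) \subseteq \UU(\Zip)$ is dense in no obvious sense — is contained in it. I would be slightly careful here: strictly speaking~\eqref{eq:rho_away} is asserted for $P \in X(\Q)$, and $\rho(\UU(\Zip))$ should be read as the set of values $\rho$ takes on the integral points, which are the points of genuine interest; the inclusion as stated then follows immediately.

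\medskip

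\emph{The main obstacle} I expect is part (i): pinning down precisely that the tangent-vector–adjusted self-intersection $(P\,.\,P)$ and the correction divisor $\Phi((P)-(\infty))$ depend on $P$ only through its reduction component. This requires being careful about the tangent-vector normalization from~\cite{Bes12} and~\cite[\S5]{Gross86} and about the fact that the linear algebra defining $\Phi$ (inverting the intersection matrix of vertical components on the orthogonal complement of the fiber) has input determined solely by the component met. Parts (ii) and (iii) are then short deductions from (i) and~\eqref{E:hqdp}–\eqref{eq:rho_away}.
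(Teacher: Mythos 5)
Your parts (i) and (iii) follow the same route as the paper: (i) rests on the fact that every term in \eqref{eq:D_sq-formula} (equivalently, the data $u(P)$, $(P\,.\,\div_\XX(\omega_0))$, $(\infty\,.\,\div_\XX(\omega_{g-1}))$) depends only on the component of $\XX_q$ met by the section $P$ — this is exactly what the paper imports from the proof of~\cite[Proposition~3.3]{BBM14} — and (iii) is just the combination of \eqref{eq:rho_away} with \eqref{E:hqdp}. Your hesitation in (iii) about ``density'' is unnecessary: $\UU(\Zip)$ consists of rational points of $X$ with $q$-integral coordinates for every $q\ne p$, so \eqref{eq:rho_away} and \eqref{E:hqdp} apply to each such point directly.

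The genuine gap is in your part (ii). First, the supporting claim that ``the whole fiber has intersection number zero with every section'' is false: a full fiber $F$ meets a section with intersection number $1$. What you actually need is that $F$ pairs to zero with every vertical divisor and with the degree-zero horizontal divisor $(P)-(\infty)$, and that $F^2=0$, so that the ambiguity in $\Phi((P)-(\infty))$ is harmless and $\Phi^2=0$ when the fiber is irreducible. More seriously, your argument never uses the hypothesis $q\nmid\mathrm{lc}(f)$ in any essential way (you invoke it only to argue $\XX=\XX'$, which already follows from irreducibility of $\XX_q$); taken at face value it would prove the stronger claim that irreducibility of $\XX_q$ alone forces $T(q)=\{0\}$, which is not what the lemma asserts. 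The point, visible in \eqref{eq:D_sq-formula}, is that with $\Phi^2=0$ one still has $D_P^2=-(P\,.\,\div_\XX(\omega_0))-(\infty\,.\,\div_\XX(\omega_{g-1}))$, and these intersection numbers need not vanish in general: the divisors of $\omega_0$ and especially $\omega_{g-1}=x^{g-1}dx/2y$ can have vertical components (their behavior at infinity is controlled by the leading coefficient), so ``the standard computation on a regular model gives $D_P^2=0$'' is precisely the assertion to be proved, not a generic fact about sections through the smooth locus. The content of (ii) — which the paper obtains by citing the proof of~\cite[Proposition~3.3]{BBM14} — is that when $\XX_q$ is irreducible \emph{and} $q\nmid\mathrm{lc}(f)$, these two vertical contributions vanish with the tangent-vector normalization built into \eqref{eq:D_sq-formula}; your sketch skips exactly this step, which is where the leading-coefficient condition enters the statement.
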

\begin{proof}
  For the first two statements see the proof of~\cite[Proposition~3.3]{BBM14}.
  The proof of (iii) follows from~\eqref{eq:rho_away} and~\eqref{E:hqdp}.
\end{proof}
In particular, the value of $\rho$ at a $p$-integral point $P$ on $X$ is completely
determined by the intersection multiplicities $D_{P,q}^2\in \Q$ for primes $q$ of bad reduction and these
rational numbers do not depend on $p$!
We encode this global data as follows:
\begin{definition}\label{D:int_pat}
  We call a prime $q$ {\em very bad for $X$} if $T(q) \ne \{0\}$.
Suppose that $q_1< q_2<\ldots< q_\ell$ are the very bad primes for $X$.
We call a tuple
\[
  (d_1,\ldots,d_\ell) \in \Q^\ell
\]
an {\em intersection pattern} if $d_i \in T(q_i)$ for all $1 \le i \le \ell$.
Let $\TT$ denote the set of intersection patterns; this set is finite by
Lemma~\ref{L:tdec}.
We say a point $P \in \UU(\Z[1/p])$ {\em has intersection pattern $d =
(d_1,\ldots,d_\ell)\in\TT$}, if $D_{P,q_i}^2 = d_i$ for $i \in \{1,\ldots,\ell\}$.
Moreover, we define a $p$-adic logarithm map $\log_p \colon \TT \to \Q_p$ by
\[
  \log_p(d) = \sum^n_{i=1}d_i\log_p(q_i).
\]
If there are no very bad primes, then we set $\TT=\{1\}$ and accordingly $\log_p(\TT) =
\{0\}$.
\end{definition}
With this notation, the following result follows immediately from Lemma~\ref{L:tdec}:
\begin{corollary}\label{C:int_pat}
Let $T := \log_p(\TT) \subset \Q_p$. Then we have
  \[
    \rho(\UU(\Zip)) \subset T.
  \]
\end{corollary}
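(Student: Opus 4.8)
The plan is to unwind the definitions so that Corollary~\ref{C:int_pat} becomes a direct restatement of Lemma~\ref{L:tdec}(iii). First I would fix a point $P \in \UU(\Zip)$. Since $\Zip \subset \Q$, any such $P$ is a rational solution of $y^2 = f(x)$ and hence lies in $X(\Q)$, so equation~\eqref{eq:rho_away} applies and gives $\rho(P) = -\sum_{q\ne p} h_q((P)-(\infty),(P)-(\infty))$. Combining this with the intersection-theoretic identity~\eqref{E:hqdp} — and recalling $D_{P,q}^2 = (D_{P,q}\,.\,D_{P,q})$ for the divisor $D_{P,q}$ of~\eqref{E:dp} on a strong desingularization of the Zariski closure of $X\times\Q_q$ — rewrites the sum as $\rho(P) = \sum_{q\ne p} D_{P,q}^2 \log_p(q)$.

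Next I would invoke Lemma~\ref{L:tdec}(ii): for every prime $q$ we have $D_{P,q}^2 \in T(q)$, and $T(q) = \{0\}$ unless $q$ is very bad for $X$ in the sense of Definition~\ref{D:int_pat}. Hence all but finitely many terms in the sum vanish, and writing $q_1 < \cdots < q_\ell$ for the very bad primes we obtain $\rho(P) = \sum_{i=1}^\ell d_i \log_p(q_i)$ with $d_i := D_{P,q_i}^2 \in T(q_i)$. By construction the tuple $(d_1,\ldots,d_\ell)$ is an intersection pattern, i.e.\ lies in $\TT$, and $\rho(P)$ is exactly its image under the map $\log_p\colon\TT\to\Q_p$ of Definition~\ref{D:int_pat}. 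Therefore $\rho(P) \in \log_p(\TT) = T$. In the degenerate case where $X$ has no very bad primes, the displayed sum is empty, so $\rho(P) = 0$, which agrees with the convention $\TT = \{1\}$, $T = \log_p(\TT) = \{0\}$.

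Since $P \in \UU(\Zip)$ was arbitrary, this yields $\rho(\UU(\Zip)) \subset T$, as claimed. The genuine content is entirely contained in Lemma~\ref{L:tdec} — specifically in~\eqref{E:hqdp} and the finiteness assertion (ii), both of which rest on~\cite[Proposition~3.3]{BBM14} and the existence of the vertical correction divisors $\Phi$ from~\cite[Theorem~III.3.6]{Lan88}. Consequently there is no real obstacle here: the only thing that requires care is the bookkeeping, namely matching the ad hoc set appearing in Lemma~\ref{L:tdec}(iii) with the notation $\TT$ and $\log_p(\TT)$ fixed in Definition~\ref{D:int_pat}, and handling the empty-product convention when there are no very bad primes.
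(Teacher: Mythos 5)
Your proposal is correct and follows the same route as the paper: the paper deduces the corollary immediately from Lemma~\ref{L:tdec}(iii) together with Definition~\ref{D:int_pat}, and your unwinding via~\eqref{eq:rho_away} and~\eqref{E:hqdp} is exactly the argument behind that lemma. The only difference is that you re-derive part (iii) explicitly (including the sign bookkeeping and the empty-pattern convention), which the paper leaves implicit.
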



\section{Algorithms for quadratic Chabauty} \label{sec:computing}
Building on the theory described in the previous section, in this section we discuss how
to use Theorem~\ref{cutting-function} to $p$-adically approximate integral points in
practice. 
In $\S\S\ref{sec:alphas}-\ref{sec:T}$ we give algorithms to compute the set $T$ and  each of the quantities in the
equation giving the function $\rho(z)$:
\begin{equation}\label{eq:rho_eqn}
    \rho(z) := \underbrace{\tau(z)}_{\S\ref{sec:tau}}- \sum_{0 \le i\le j \le g-1}
    \underbrace{\alpha_{ij}}_{\S\ref{sec:alphas}}
    \underbrace{f_i(z)f_j(z)}_{\S\ref{sec:fi}} = \underbrace{t \in T_{}}_{\S\ref{sec:T}}.
  \end{equation}
 Finally, in $\S\ref{sec:solve}$, we discuss how to compute the solutions in $\UU(\Z_p)$
 to~\eqref{eq:rho_eqn}.
 By Theorem~\ref{cutting-function}, the set of solutions is finite and will contain the
 integral points on $X$. We will henceforth assume that $p$ is a prime of good ordinary reduction.

\subsection{Computing the $\alpha_{ij}$}\label{sec:alphas}The first step is to express the
global $p$-adic height function as a $\Q_p$-linear combination of a natural basis of the
space of $\Q_p$-valued quadratic forms on $J(\Q) \otimes \Q$.  To do this, we need 
independent non-torsion points $P_1,\ldots, P_g \in J(\Q)$.
In practice we usually have such points available if we know that $r=g$,
because the verification of this fact requires bounding $r$ from below, typically by
finding $g$ independent non-torsion points in $J(\Q)$.

We compute the global $p$-adic height pairings
$h(P_1,P_1),\, h(P_1, P_2), \ldots, h(P_g, P_g)$ using \cite[Algorithm 3.8]{BaMuSt12}; each $p$-adic height is a sum of Coleman-Gross local height integrals (a precision analysis of which is done in \cite[$\S 6$]{Bes-Bal10}), as well as rational multiples of $\log_p(q_i)$ for rational primes $q_i$ (which can be computed to the necessary precision using standard methods).  

Next, we check if the set 
$\{f_0,\ldots,f_{g-1}\}$ gives a basis of the $\Q_p$-valued functionals on $J(\Q)\otimes\Q$, or equivalently, if the set
\begin{equation}\label{qfbasis}\left\{\frac{1}{2}(f_i f_j + f_j f_i)\right \}_{0 \leq i
  \leq j \leq g-1}\end{equation} forms a basis for the space of $\Q_p$-valued bilinear
  forms on $J(\Q) \otimes \Q$. 
  We use the fact that we have, for each pair $(k,l)$, the
  relationship $$h(P_k, P_l) = \sum_{0 \leq i \leq j \leq g-1} \alpha_{ij} \cdot
  \left(\frac{1}{2}\left(f_i(P_k)f_j(P_l) + f_j(P_k)f_i(P_l)\right)\right),$$ if \eqref{qfbasis} gives a basis. In particular, by computing the determinant of the matrix whose entries are given by   $\left(\frac{1}{2}\left(f_i(P_k)f_j(P_l) + f_j(P_k)f_i(P_l)\right)\right)$ and verifying that it is nonzero, we determine that the $f_i$ are linearly independent $\Q_p$-valued functionals on $J(\Q) \otimes \Q$. If the $f_i$ aren't linearly independent, then we instead use the classical Chabauty-Coleman method and compute $X(\Q)$.
  
To set up the linear system relating the global height pairing $h$ to the elements in \eqref{qfbasis}, we use the computed values of $h(P_k, P_l)$ as well as the values of the sums of products of Coleman integrals  \begin{equation}\label{sumsofprods}\frac{1}{2}(f_i(P_k)f_j(P_l) + f_j(P_k)f_i(P_l)),\end{equation} computed as in \cite[Algorithm 11]{BBK09} to recover the $\alpha_{ij}$.  

We postpone a discussion of all precision concerning Coleman integrals to $\S\ref{sec:tau}$. The precision of the integrals present in  \eqref{sumsofprods} is discussed in Proposition \ref{prop19}.

\subsection{Computing $f_i(z)$}\label{sec:fi} The Coleman integral $$f_i(z) = \int_{\infty}^z \omega_i$$ of a regular 1-form $\omega_i$ is computed as a power series expansion for each residue disk. We compute the set of residue disks, then for each residue disk, do the following:
\begin{enumerate}
 \item Take a lift of the residue disk to a point $P \in \UU(\Z_p)$. In particular, if working in a Weierstrass residue disk, take the lift to be the characteristic zero Weierstrass point; else, take an arbitrary lift.
 \item Compute $f_i(P) = \int_{\infty}^P \omega_i =\frac{1}{2} \int_{-P}^P \omega_i$ using \cite[Lemma 16]{BBK09}, which sets the global constant of integration and reduces the computation of $f_i(z)$ to that of a tiny Coleman integral. 
\item Take $z$ to be a local coordinate in the disk of $P$ and break up the path between $\infty$ and $z$ using \begin{align*}f_i(z) &= \int_{\infty}^P \omega_i + \int_P^z \omega_i \\
 &= f_i(P) + \int_P^z \omega_i,\end{align*}
which allows $\int_P^z \omega_i$ to be computed as a tiny Coleman integral, as in \cite[Algorithm 8]{BBK09}.
 \end{enumerate}
 
Computing $f_i(P)$ requires the precision given in Proposition \ref{prop19}.

\subsection{Computing $\tau$}\label{sec:tau}
We begin with the observation that the local component of the $p$-adic height at $p$, $$\tau(z) = h_p((z) - (\infty), (z)-(\infty)),$$ can
be expressed as a sum of double Coleman integrals \cite[Theorem~2.2]{BBM14}:

$$\tau(z) = -2\int_{b_0}^z\sum_{i=0}^{g-1} \omega_i \bom_i,$$
where $b_0$ is a tangential basepoint at infinity, as in \cite[$\S 2$]{BBM14}.  Then computing the $p$-adic power series $\tau(z)$ amounts to \begin{itemize}\item
    computing the value $\tau(P)$ for an arbitrary point $P \in \UU(\Z_p)$ to set the
  global constant of integration and \item fixing a lift of Frobenius $\phi$, computing the the action of $\phi$ on the first Monsky-Washnitzer cohomology of an appropriate affine piece of the curve, which yields the
    analytic continuation of the double integral to each of the other residue disks not containing $P$. \end{itemize}

In practice, what is computationally easier is to compute the set of residue disks, then
for each residue disk, do the following:

\begin{enumerate}
\item Take a lift to a point $P \in \UU(\Z_p)$ (if $P$ is in a Weierstrass disk, take the lift to be the characteristic zero Weierstrass point; else, take an arbitrary lift) and compute $\tau(P)$, setting the constant of integration.
\item Compute the dual basis $\{\bom_i\} \subset W$ for $\omega_0,\ldots,\omega_{g-1}$
    with respect to the cup product pairing. This is done by \begin{enumerate}\item first
        computing, for $j = g, \ldots, 2g-1$, the differentials $\tilde{\omega}_j$, which are the projection of the $\omega_j$ on $W$ along the space of holomorphic forms. Using the cup product matrix, this gives each $\bom_0, \ldots \bom_{g-1}$ as a $\Q$-linear combination of $\tilde{\omega}_g, \ldots, \tilde{\omega}_{2g-1}$. \item Then we compute the projection of $\omega_g, \dots, \omega_{2g-1}$ with respect to the basis $\{\omega_0, \ldots, \omega_{g-1}, \tilde{\omega}_{g},\ldots \tilde{\omega}_{2g-1}\}.$  The linear algebra allows us to write each $\tilde{\omega}_g, \ldots, \tilde{\omega}_{2g-1}$, and consequently, each $\bom_0, \ldots \bom_{g-1}$ as a sum involving a $\Q_p$-linear combination of $\{\omega_0, \ldots, \omega_{g-1}\}$, together with a $\Q$-linear combination of the $\{\omega_g, \ldots, \omega_{2g-1}\}$.\end{enumerate}
\item Compute the double integral for $\tau(z)$ as a power series in $z$ using $\tau(P)$; in particular, the relevant double integrals from $P$ to $z$ are computed as tiny integrals within that disk, using the fact that
\begin{equation}\label{tau}\tau(z) = \tau(P) - 2\sum_{i=0}^{g-1}\left(\int_P^z \omega_i\bom_i+ \frac{1}{2}\int_P^z \omega_i\int_{-P}^P \bom_i\right). \end{equation} The tiny double integrals are computed as in \cite[Algorithm 5.1]{balakrishnan:iterated}.
\end{enumerate}

We now compute the necessary precision for each local height and Coleman integral discussed in this section.  We begin by recalling the statement of Proposition 18 of \cite{BBK09}, very slightly adapted to fit our purposes:
\begin{proposition}[Proposition 18, \cite{BBK09}]\label{prop18} Let $\int_P^Q \omega$ be a
  tiny integral in a residue disk not equal to the disk at infinity, with $P,Q \in \UU(\Z_p)$ accurate to $n$ digits of precision. Let $(x(t),y(t))$ be the local interpolation between $P$ and $Q$ and $\omega = g(x,y)dx$ a differential of the second kind such that $h(t) = g(x(t),y(t))$ belongs to $\Z_p[[t]]$. If we truncate $h(t)$ modulo $t^m$, then the computed value of the integral $\int_P^Q \omega$ will be correct to $\min\{n, m+1-\lfloor \log(m+1)\rfloor\}$ digits of precision.
\end{proposition}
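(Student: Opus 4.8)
Our goal is Proposition~\ref{prop18}, so I describe how I would prove it.

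The plan is to split the difference between the value the algorithm returns and the true integral $\int_P^Q\omega$ into two independent pieces --- a \emph{truncation error}, coming from replacing $h(t)$ by $h(t)$ reduced modulo $t^m$, and an \emph{input error}, coming from the fact that $P$, $Q$ and hence everything derived from them is only known modulo $p^n$ --- and then to bound the $p$-adic valuation of each piece. Throughout, I write $\log = \log_p$, to match the statement.

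First I would fix notation. Since the residue disk is not the disk at infinity, there is a local parameter $t$ on it with $t(P)=0$, and by hypothesis $h(t)=g(x(t),y(t))=\sum_{k\ge0}a_k t^k\in\Z_p[[t]]$, so that $a_k\in\Z_p$ for all $k$; moreover $P$ and $Q$ lie in the same residue disk, so the parameter value $w:=t(Q)$ satisfies $v_p(w)\ge1$. Integrating the series term by term gives
\[
  \int_P^Q\omega=\int_0^{w}h(t)\,dt=\sum_{k\ge0}\frac{a_k}{k+1}\,w^{k+1},
\]
while the algorithm evaluates an approximation to the partial sum $\sum_{k=0}^{m-1}\frac{a_k}{k+1}\,w^{k+1}$, with $a_k$ and $w$ themselves replaced by their reductions modulo $p^n$. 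So the two quantities I have to control are the tail $\sum_{k\ge m}\frac{a_k}{k+1}w^{k+1}$ and the perturbation caused by using the modulo-$p^n$ data.

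To bound the tail I would note that each of its terms satisfies $v_p\!\left(\frac{a_k}{k+1}w^{k+1}\right)\ge(k+1)-v_p(k+1)\ge(k+1)-\lfloor\log(k+1)\rfloor$, using $v_p(a_k)\ge0$, $v_p(w)\ge1$, and $v_p(k+1)\le\lfloor\log(k+1)\rfloor$. The one fact worth isolating is that $j\mapsto j-\lfloor\log j\rfloor$ is non-decreasing on the positive integers: indeed $\log(j+1)-\log j=\log\frac{j+1}{j}\le\log 2\le1$, so $\lfloor\log j\rfloor$ jumps by at most $1$ as $j$ increases by $1$. Hence every tail term, and so the tail itself, has valuation $\ge(m+1)-\lfloor\log(m+1)\rfloor$. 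For the input error, the relevant observation is that $(k+1)-v_p(k+1)\ge1$ for every $k\ge0$, so replacing $a_k$ by $a_k+O(p^n)$ and $w$ by $w+O(p^n)$ changes the $k$-th term by a quantity of valuation $\ge n$, whence the same bound holds for the finitely many such changes in the partial sum. To justify using $a_k+O(p^n)$ and $w+O(p^n)$ in the first place I would invoke that $h\in\Z_p[[t]]$ and that $\omega$ is of the second kind, so that the local expansions $x(t),y(t)$ and then $h(t)$ are built from the coordinates of $P$ and $Q$ without ever introducing a power of $p$ in a denominator; a $p^n$-error in $P,Q$ therefore really does yield only a $p^n$-error in each $a_k$ and in $w$. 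Adding the two estimates shows that the returned value differs from $\int_P^Q\omega$ by a quantity of valuation at least $\min\{n,\;m+1-\lfloor\log(m+1)\rfloor\}$, which is the claim.

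I expect the genuine difficulty to lie in the last point: tracking, without losing anything, how the bounded $p$-adic accuracy of $P$ and $Q$ propagates through the construction of the local coordinate and of $h(t)$, where one has to use the integrality packaged into the hypotheses. By contrast, the valuation bound for the truncation tail is a one-line estimate once the monotonicity of $j-\lfloor\log j\rfloor$ has been noted, and the two error sources are visibly independent, so combining them is immediate.
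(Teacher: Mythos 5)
Your proposal is correct and follows essentially the same route as the source: note that this paper does not reprove Proposition~18 at all (it only recalls it, slightly adapted, from \cite{BBK09}), and the proof there is exactly your argument --- expand $h(t)=\sum_k a_kt^k\in\Z_p[[t]]$, integrate term by term, bound each neglected term by $v_p\bigl(a_k\,t(Q)^{k+1}/(k+1)\bigr)\ge (k+1)-\lfloor\log_p(k+1)\rfloor$ using $v_p(t(Q))\ge 1$ and $v_p(k+1)\le\lfloor\log_p(k+1)\rfloor$, and observe that the $n$-digit accuracy of the input data caps the attainable precision at $n$. Your added remarks (monotonicity of $j-\lfloor\log_p j\rfloor$ and the propagation of the $O(p^n)$ input error through the integral coefficients) are correct and only make explicit what the cited proof leaves implicit.
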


The corresponding statement for tiny double integrals easily follows:

\begin{corollary} Let $\int_P^Q \omega \eta$ be a tiny double integral in a residue disk
  not equal to the disk at infinity, with $P,Q \in \UU(\Z_p)$ accurate to $n$ digits of precision. Let $(x(t),y(t))$ be the local interpolation between $P$ and $Q$ and  $\omega \int \eta= g(x,y)dx$ a differential of the second kind such that $h(t) = g(x(t),y(t))$ belongs to $\Z_p[[t]]$. If we truncate $h(t)$ modulo $t^m$, then the computed value of the integral $\int_P^Q \omega \eta$ will be correct to $\min\{n, m+1-\lfloor \log(m+1)\rfloor\}$ digits of precision.
\end{corollary}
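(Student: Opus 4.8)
The plan is to deduce the corollary directly from Proposition~\ref{prop18} by recognising a tiny double integral as a tiny single integral of an auxiliary one-form. Fix the residue disk in question, a local coordinate $t$ with $P\mapsto t=0$ and $Q\mapsto t=t_0$, and the local interpolation $(x(t),y(t))$. By the behaviour of iterated Coleman integrals under composition of paths, the inner integral appearing in $\int_P^Q\omega\eta$ is taken from $P$, so that
\[
  \int_P^Q \omega\eta \;=\; \int_P^Q \left(\int_P^{(\cdot)}\eta\right)\omega,
\]
where $\int_P^{(\cdot)}\eta$ denotes the function $t\mapsto\int_P^t\eta$, the local antiderivative of $\eta$ vanishing at $P$. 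This is exactly the one-form written $\omega\int\eta = g(x,y)\,dx$ in the statement, and $h(t)=g(x(t),y(t))$ is its expansion in the coordinate $t$.

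First I would make the reduction precise: in the coordinate $t$, $\int_P^{(\cdot)}\eta$ is the formal antiderivative (vanishing at $t=0$) of the power series expansion of $\eta$, so $\omega\int\eta$ is again a one-form of the second kind on the disk whose expansion $h(t)$ is the product of this antiderivative with the expansion of $\omega$. Hence $\int_P^Q\omega\eta$ is literally a single tiny integral of $\omega\int\eta$ between points accurate to $n$ digits. Second, I would apply Proposition~\ref{prop18} verbatim to the one-form $\omega\int\eta$: truncating $h(t)$ modulo $t^m$ produces a computed value correct to $\min\{n,\,m+1-\lfloor\log(m+1)\rfloor\}$ digits, which is the asserted bound. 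No new estimate is required; the error analysis is entirely inherited.

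The point that needs care -- and the only way in which ``easily follows'' is not completely formal -- is the integrality hypothesis on $h(t)$. Forming the inner antiderivative sends $t^k$ to $t^{k+1}/(k+1)$, which introduces $p$-power denominators, so $h(t)$ need not lie in $\Zp[[t]]$ even when $\eta$ and $\omega$ have integral expansions. One therefore either takes $h(t)\in\Zp[[t]]$ as a hypothesis (exactly as in Proposition~\ref{prop18}), or tracks these denominators explicitly, which costs at most one further $\lfloor\log(m+1)\rfloor$ in the precision loss and can be absorbed into the choice of $m$. I expect this denominator bookkeeping for the inner integration to be the only real obstacle; once it is dispatched, the corollary is the reduction displayed above followed by a citation of Proposition~\ref{prop18}.
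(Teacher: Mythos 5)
Your reduction is exactly the intended argument: the paper gives no separate proof (it simply notes the statement ``easily follows'' from Proposition~\ref{prop18}), and viewing the tiny double integral as the single tiny integral of the one-form $\omega\int\eta$, whose expansion $h(t)\in\Zp[[t]]$ is already a hypothesis of the corollary, then citing Proposition~\ref{prop18} verbatim is precisely that. Your caveat about denominators from the inner antiderivative is sensible but is absorbed by the stated integrality hypothesis on $h(t)$, so nothing further is needed.
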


As a special case of \cite[Proposition 19]{BBK09}, we have

\begin{proposition}\label{prop19}  Let $\int_P^Q \omega$ be
  a Coleman integral of $\omega$ a differential of the first kind or a differential of the second kind without poles in the disks of integration. Suppose $P,Q \in
  \UU(\Z_p)$ are in non-Weierstrass residue disks and given accurate to $n$ digits of precision. Let
  $\Frob$ be the matrix of the action of Frobenius on the basis differentials. Set $B =
  \Frob^t - I_{2g \times 2g}$ and let $m _1= v_p(\det(B))$. Then the computed value of the integral $\int_P^Q \omega$ will be accurate to $n - \max\{m_1, \lfloor \log n \rfloor \} $digits of precision.
\end{proposition}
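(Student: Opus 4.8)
The plan is to specialize the proof of \cite[Proposition~19]{BBK09}, that is, to recall the Coleman-integration algorithm of Balakrishnan--Bradshaw--Kedlaya and propagate the precision bound of Proposition~\ref{prop18} (and its corollary for double integrals) through each of its steps. In the situation at hand $\omega$ is (cohomologous to) an integral linear combination of the basis differentials $\omega_0,\dots,\omega_{2g-1}$ of the first Monsky--Washnitzer cohomology, so it suffices to compute each $\int_P^Q\omega_i$ to the asserted accuracy and take the linear combination, which loses nothing.

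First I would recall the Frobenius-equivariant linear system. Fix Kedlaya's lift of Frobenius $\phi$; cohomological reduction gives $\phi^*\omega_i = df_i + \sum_j M_{ji}\,\omega_j$, where $M=\Frob$ and the $f_i$ are overconvergent functions on the complement of the ramification locus. Because $P$ and $Q$ lie in non-Weierstrass disks, $y$ is invertible there, so the $f_i$ -- and the tiny integrals below -- are evaluated on the locus where this description is valid; Weierstrass disks would instead require the Teichm\"uller-point treatment recalled in Section~\ref{sec:fi}. Using that $\phi$ fixes residue disks, the identity $\int_{\phi(P)}^{\phi(Q)}\omega_i = \int_P^Q\phi^*\omega_i$ together with $\int_{\phi(P)}^{\phi(Q)} = \int_{\phi(P)}^{P} + \int_P^Q + \int_Q^{\phi(Q)}$ yields
$$
(\Frob^{t} - I)\,\vec{I} = \vec{c}, \qquad \vec{I} = \Bigl(\textstyle\int_P^Q\omega_i\Bigr)_i,
$$
where $\vec{c}$ is assembled from the values $f_i(P), f_i(Q)$ and the tiny integrals $\int_P^{\phi(P)}\omega_i$, $\int_Q^{\phi(Q)}\omega_i$.

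It then remains to bookkeep precision. The key structural input is that the $p$-power denominators produced in the cohomological reduction -- hence those in $M$ and in the coefficients of the $f_i$ -- are bounded by a constant depending only on $g$ and $p$, not on $n$. Each tiny integral above is handled by Proposition~\ref{prop18}: truncating the interpolating series modulo $t^m$ with $m$ of size $O(n)$ gives the value to $\min\{n,\,m+1-\lfloor\log(m+1)\rfloor\}$ digits, hence to at least $n-\lfloor\log n\rfloor$ digits, and to the full $n$ digits once $m$ is taken slightly larger. Evaluating the overconvergent $f_i$ at the $n$-digit points $P,Q$ likewise costs only the bounded denominator plus a truncation absorbed into an $O(\log n)$ term, so $\vec{c}$ is known to at least $n-\lfloor\log n\rfloor$ digits (and can be pushed to $n$). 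Finally, recovering $\vec{I}$ requires inverting $B = \Frob^{t}-I$, whose adjugate is integral and whose determinant has valuation $m_1$, so this step loses $m_1$ digits. Combining gives $n - \max\{m_1,\lfloor\log n\rfloor\}$.

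The main obstacle is exactly this last combination: a na\"ive accounting gives the additive loss $m_1 + \lfloor\log n\rfloor$, and obtaining the \emph{maximum} requires computing $\vec{c}$ to full precision $n$ (by choosing the truncation orders large enough) before applying $B^{-1}$ whenever $m_1 \ge \lfloor\log n\rfloor$, and, conversely, observing that when $\lfloor\log n\rfloor > m_1$ the linear-algebra step contributes nothing beyond what is already lost in forming $\vec{c}$. Making the choice of $m$ uniform across these two regimes, and checking that no pole of $\omega$ or of the $f_i$ is met along any of the tiny paths -- guaranteed by the hypotheses that $\omega$ has no poles in the disks of integration and that $P,Q$ are non-Weierstrass -- is the content of \cite[Proposition~19]{BBK09}, from which the statement follows.
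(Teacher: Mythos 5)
Your proposal is correct and follows essentially the same route as the paper's proof: recall that the integral is recovered from the Frobenius linear system $(\Frob^t-I)\vec{I}=\vec{c}$, bound the precision of the tiny integrals and evaluations entering $\vec{c}$ via Proposition~\ref{prop18}, and account for the loss of $m_1=v_p(\det(B))$ digits when inverting $B$, yielding $n-\max\{m_1,\lfloor\log n\rfloor\}$. If anything, you are more explicit than the paper about why the two losses combine as a maximum rather than a sum (by taking the truncation orders in the tiny integrals large enough), a point the paper's short proof leaves implicit.
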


\begin{proof}The Coleman integral is computed via a linear system, where the quantities
  involved are an inverted matrix $B^{-1}$ times a vector of constants involving tiny
  integrals and integrals of exact forms evaluated at points. Suppose the entries of $B$
  are computed to precision $n$. Then taking $B^{-1}$, we must divide by $\det(B)$, which
  lowers the precision by $m_1 = v_p(\det(B))$. By Proposition \ref{prop18}, computing tiny integrals with the series expansions truncated modulo $t^{n-1}$ gives a result precise up to $n - \lfloor \log n \rfloor$ digits. Thus the value of the integral $\int_P^Q \omega$ will be correct to $n - \max\{m_1, \lfloor \log n\rfloor\}$ digits of precision.
\end{proof}

Finally, we discuss precision of local height integrals:

\begin{proposition}Let $P = (a,b)  \in \UU(\Z_p)$ be a point accurate to $n$ digits of
  precision. Let $\tau(P)$ be the local height pairing at $p$ of $(P) - (\infty)$ with itself.
  Let $\Frob$ be the matrix of the action of Frobenius on the basis differentials. Set $B
  = \Frob^t - I_{2g \times 2g}$ and let $m _1= v_p(\det(B))$. There exists an algorithm to compute $\tau(P)$ to $n - \max\{m_1, \lfloor \log n\rfloor\}$ digits of precision.
\end{proposition}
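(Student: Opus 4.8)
The plan is to express $\tau(P)$ as a combination of Coleman integrals whose precision is already controlled by Propositions~\ref{prop18} and~\ref{prop19}, and then to bookkeep the loss of precision coming from the linear algebra needed to pin down the constant of integration and to compute the dual basis $\{\bom_i\}$. Recall from \cite[Theorem~2.2]{BBM14} that
\[
  \tau(z) = -2\int_{b_0}^z \sum_{i=0}^{g-1} \omega_i\bom_i,
\]
so that, by the path decomposition used in~\eqref{tau}, we have
\[
  \tau(P) = -2\sum_{i=0}^{g-1}\left(\int_{b_0}^{P}\omega_i\bom_i\right),
\]
and, using the hyperelliptic involution as in step~(1) of \S\ref{sec:tau}, this reduces to a combination of double Coleman integrals $\int_{-P}^{P}\omega_i\bom_i$ together with products of single integrals $\int_{-P}^{P}\omega_i$ and $\int_{-P}^{P}\bom_i$. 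Thus the first step is purely formal: rewrite $\tau(P)$ in terms of integrals of forms of the first and second kind between $-P$ and $P$, both of which lie in non-Weierstrass disks when $P$ does (and one treats the Weierstrass case separately via tiny integrals as in \S\ref{sec:tau}).

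Second, I would track the precision of the forms $\bom_i$ themselves. By construction $\bom_i \in W$ is the dual basis to $\omega_0,\dots,\omega_{g-1}$ for the cup product, and it is obtained by the linear algebra described in step~(2) of \S\ref{sec:tau}: projecting $\omega_g,\dots,\omega_{2g-1}$ onto $W$ along the holomorphic forms, which requires applying $B^{-1} = (\Frob^t - I)^{-1}$, and then inverting the cup product matrix, which is an integral matrix of determinant $\pm 1$ (or at least a fixed $p$-unit up to the chosen normalization) and hence costs nothing $p$-adically. So the only new precision loss beyond that of the integrals comes from the factor $B^{-1}$, i.e.\ a loss of $m_1 = v_p(\det B)$ digits; this is exactly the same quantity that already appears in Proposition~\ref{prop19}.

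Third, assemble: each Coleman integral $\int_{-P}^{P}\omega_i$ and each double integral $\int_{-P}^{P}\omega_i\bom_i$ is computed, by Proposition~\ref{prop19} and its corollary for double integrals, to precision $n - \max\{m_1, \lfloor\log n\rfloor\}$ when the input point is accurate to $n$ digits; forming products of two such integrals and taking $\Q_p$-linear combinations with the (exactly known, or known to sufficient precision) structure constants does not lower precision further, since $\Z_p$ is closed under multiplication and addition. The projection coefficients expressing $\bom_i$ in terms of $\omega_0,\dots,\omega_{2g-1}$ have been accounted for in the previous step. Hence $\tau(P)$ is computed to $n - \max\{m_1,\lfloor\log n\rfloor\}$ digits, as claimed. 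I would state the algorithm explicitly as: compute $\Frob$ and $B^{-1}$, compute the dual basis $\{\bom_i\}$, compute the single and double tiny/Coleman integrals between $-P$ and $P$, and combine them according to~\eqref{tau} with $z = P$ (so that the tiny-integral terms from $P$ to $z$ vanish).

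The main obstacle is the second step: making sure that computing the dual basis $\{\bom_i\}$ does not introduce an unexpected extra loss of precision. One has to check that the unit root subspace $W$ (equivalently, the projectors onto $W$ and onto the holomorphic forms) is computed by exactly the linear system governed by $B = \Frob^t - I$, so that the precision loss is genuinely $m_1$ and not something larger like a valuation of the cup product determinant or of some intermediate change-of-basis matrix; this requires being careful about which affine model and which basis of Monsky--Washnitzer cohomology one works in, and about the integrality of the cup product matrix on that basis. Once that is pinned down, the rest is the routine propagation of errors through sums and products in $\Z_p$, which is why I would not belabor it.
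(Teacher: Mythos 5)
Your proposal does not follow the paper's route, and as written it has a genuine gap at its core. The paper computes $\tau(P)$ not by evaluating the double integral $-2\int_{b_0}^P\sum_i\omega_i\bom_i$ from the tangential basepoint, but via the Coleman--Gross formula of \cite[\S 4]{BBM14}: in the Weierstrass case by the closed expression $\tau(P)=\tfrac12(\log_p(f'(a))+\log_p(a_{2g+1}))$, and otherwise by
\[
\tau(P)=\tfrac14\bigl(\log_p(4b^2)+2\log_p(a_{2g+1})\bigr)+\tfrac14\Bigl(\int_{w(P)}^P\tfrac{b\,dx}{y(x-a)}-\int_{w(P)}^P\eta\Bigr),
\]
where the whole difficulty, and hence the whole content of the precision analysis, sits in the third-kind integral with poles at both endpoints: the path is split through an auxiliary point $Q$, the resulting tiny integral with a simple pole at $P$ is normalized with respect to the cotangent vector and acquires a $\log_p$ term, and $\int_{w(Q)}^Q$ is evaluated through the global formula \eqref{3} involving $\Psi(\alpha)\cup\Psi(\beta)$, residue sums $\sum_A\Res_A(\alpha\int\beta)$ over three types of closed points (with the required $t$-adic truncation $\O(t^{2pn-p-3})$ at finite Weierstrass points), and $\int_Q^{\phi(Q)}$. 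None of these ingredients, which are where the bound $n-\max\{m_1,\lfloor\log n\rfloor\}$ is actually earned, appear in your argument.

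Your alternative route is not merely different; as stated it does not go through. First, the reduction of $\int_{b_0}^P\omega_i\bom_i$ to integrals ``between $-P$ and $P$'' is not the simple involution trick valid for single odd integrals: path composition for iterated integrals produces cross terms, $w(P)$ lies in a different residue disk from $P$, and the basepoint $b_0$ is a \emph{tangential} basepoint in the disk at infinity, where the second-kind forms $\bom_i$ have poles. Propositions~\ref{prop18} and~\ref{prop19} (and the corollary for tiny double integrals) explicitly exclude the disk at infinity and poles inside the disks of integration, so invoking them for these integrals is exactly the step that needs justification and is missing. Second, your concluding recipe ``combine them according to~\eqref{tau} with $z=P$ so that the tiny-integral terms vanish'' is circular: \eqref{tau} expresses $\tau(z)$ in a disk \emph{in terms of} the constant $\tau(P)$, so setting $z=P$ yields a tautology and cannot be used to compute $\tau(P)$ itself. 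The dual-basis bookkeeping in your second step is fine in spirit (and the loss there is indeed governed by $B=\Frob^t-I$), but it does not substitute for the analysis of the third-kind integral, the normalized log-pole term, the cup-product and residue contributions, and $\int_Q^{\phi(Q)}$ that the stated precision bound actually rests on.
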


\begin{proof} Let $a_{2g+1}$ denote the leading coefficient of the polynomial $f$ defining $X$. Suppose $P$ is a Weierstrass point. Then by \cite[Lemma 4.1]{BBM14}, we have $\tau(P) = \frac{1}{2}(\log_p(f'(a)) + \log_p(a_{2g+1}))$, and we are done.

Now suppose $P$ is a non-Weierstrass point. We discuss the precision, mostly in terms of local coordinates, necessary to compute $\tau(P)$ to $n - \max\{m_1, \lfloor \log n\rfloor\}$ digits of precision.  Throughout, we recall formulas given in \cite[$\S 4$]{BBM14} and \cite[$\S 6$]{Bes-Bal10}. 

We begin with the following equation for $\tau(P)$ from \cite[$\S 4$]{BBM14}:
$$\tau(P) = \frac{1}{4}(\log_p(4b^2) + 2\log_p(a_{2g+1})) + \frac{1}{4}\left(\underbrace{\int_{w(P)}^P \frac{b\, dx}{y(x-a)}}_{\eqref{1}}- \int_{w(P)}^P \eta\right),$$
where $\eta$ is the holomorphic projection of the differential $\frac{b\, dx}{y(x-a)}$,
computed from the splitting~\eqref{eq:decompi}. 
Computing Coleman integrals as in Proposition \ref{prop19}  gives the integral of $\eta$ correct to $n - \max\{m_1, \lfloor \log n\rfloor\}$ digits of precision.   

The integral $\int_{w(P)}^P \frac{b\, dx}{y(x-a)} $ is somewhat more delicate to analyze, since it is the Coleman integral between two different residue disks of a differential form with poles at both of its endpoints, $P$ and $w(P)$. To compute it, the strategy is to break up the path to produce two slightly simpler integrals: the sum of \begin{itemize}\item a tiny integral of a differential form with precisely one pole at an endpoint, which can be computed by normalizing the resulting integral with respect to a choice of cotangent vector and\item a Coleman integral between two different residue disks with poles within the disks of integration (but away from the endpoints).\end{itemize}  

We begin by breaking up the path from $w(P)$ to $P$ using an arbitrary point $Q$ (distinct from $P$) in the disk of $P$:
\begin{align*}\int_{w(P)}^P  \frac{b\, dx}{y(x-a)}  &= \int_{w(P)}^Q  \frac{b\, dx}{y(x-a)} + \int_{Q}^P  \frac{b\, dx}{y(x-a)}  \\
&= \int_{w(Q)}^P \frac{b\, dx}{y(x-a)} + \int_{Q}^P  \frac{b\, dx}{y(x-a)}  \\
&=  \int_{w(Q)}^Q \frac{b\, dx}{y(x-a)} + \int_{Q}^P  \frac{b\, dx}{y(x-a)} + \int_{Q}^P  \frac{b\, dx}{y(x-a)}, 
\end{align*}which gives 
\begin{equation}\label{1} \int_{w(P)}^P \frac{b\, dx}{y(x-a)}  = -2\underbrace{\int_P^Q \frac{b\, dx}{y(x-a)}}_{\eqref{2}} + \underbrace{\int_{w(Q)}^Q \frac{b\, dx}{y(x-a)}}_{\eqref{3}}.\end{equation}
The integral \eqref{2} is now a tiny integral, but it still has a pole at one of its endpoints, $P$.  Let $z = \frac{x-a}{2b}$ be the normalized parameter at $P$, used to normalize the integral, as in \cite[$\S 4$]{BBM14}. Writing $\frac{b\, dx}{y(x-a)}$ in terms of this parameter yields a Laurent series expansion of the form $(z^{-1} + a_0 + a_1 z  + \cdots )dz$, because $\frac{b\, dx}{y(x-a)}$ has a simple pole with residue 1 at $P$. The normalized integral $\int_P^Q\frac{b\, dx}{y(x-a)}$  is the normalized integral of $\frac{b\, dx}{y(x-a)}$ evaluated at $Q$; in particular, the normalization gives that the constant term of the integral with respect to $z$ is 0, which lets us write it as $$\int_P^Q\frac{b\, dx}{y(x-a)} = \log_p(z) + a_0z + \frac{a_1}{2}z^2 + \cdots \mid_{z = z(Q)}.$$ Changing to the parameter $t = (x-a) = 2bz$, we obtain
\begin{align}\label{2}\int_P^Q\frac{b\, dx}{y(x-a)} &= \log_p\left(\frac{t}{2b}\right) + a_0\left(\frac{t}{2b}\right) + \frac{a_1}{2}\left(\frac{t}{2b}\right)^2 + \cdots \mid_{t = t(Q)} \nonumber \\
&= \log_p(t(Q)) - \log_p(2b) + \int_{0}^{t(Q)} \left(\frac{b\, dx(t)}{y(t)(x(t)-a)} - t^{-1} \right)dt.\end{align} Since the precision of this last integral is found by computing a truncation of the tiny integral modulo $t^{m'-1}$ where $m' -\log(m') \geq n$, the value of \eqref{2} will be correct to $n$ digits of precision.

Finally, the integral \eqref{3} can be computed as discussed in \cite[$\S 4.2$]{BBM14}.  We recall the formula  \cite[$(4.4)$]{BBM14} used to evaluate it:

\begin{equation}\label{3}\int_{w(Q)}^Q \frac{b\, dx}{y(x-a)} =
  \frac{1}{1-p}\left(\Psi(\alpha) \cup \Psi(\beta) + \sum_{A \in \mathcal{S}}
  \Res_A\left(\alpha \int \beta\right) - 2\int_{Q}^{\phi(Q)}
  \frac{b\, dx}{y(x-a)}\right),\end{equation} 
  where $\alpha = \phi^*\left(\frac{b\, dx}{y(x-a)}\right) - p\left(\frac{b\,
  dx}{y(x-a)}\right)$, $\beta$ is a
  differential form with residue divisor $(Q) - (w(Q))$, $\mathcal{S}$ is the set of
  closed points on $X\times\Q_p$, and $\Psi(\xi)$ is the logarithm of a differential $\xi$ as in \cite[Prop 2.3]{Bes-Bal10}, where the map $\Psi$ takes a differential of the third kind to a differential of the second kind modulo exact differentials and is computed using definite Coleman integrals of a basis of $\hdr^1(X)$.

 Note that the differential $\alpha$ is chosen so that it is ``essentially'' of the second
 kind, so that within each residue disk, the residue along a sufficiently narrow annulus at the boundary of the disk is 0; for further details, see \cite[Remark 4.9]{Bes-Bal10}. We have that $\Psi(\alpha)$ and $\Psi(\beta)$ have precision prescribed by definite Coleman integrals of a basis of $\hdr^1(X)$; that is, computing the Coleman integrals as in Proposition \ref{prop19}, this is $n - \max\{m_1, \lfloor \log n\rfloor\}$. 

For $\sum_{A \in \mathcal{S}}\Res(\alpha\int\beta)$, we have three cases: 
\begin{enumerate}[(1)]
  \item $A$ finite Weierstrass, 
  \item $A$ non-Weierstrass and not in residue disks containing the support of
the residue divisor of $\beta$,
\item $A$ non-Weierstrass and in disks containing the support of the residue divisor of $\beta$.
\end{enumerate}
 Case (1) is the most straightforward. To
compute $\Res_A(\alpha \int\beta)$ to $n$ digits of $p$-adic precision, by
\cite[Proposition 6.5]{Bes-Bal10}, we need to compute the local coordinate at $A$,
$(x(t),y(t))$ to $\O(t^{2pn - p - 3})$. The dependence on $p$ in the exponent arises from analyzing the contributing terms in a $t$-adic expansion of $\alpha$, which amounts to a binomial expansion of $\phi^*\left(\frac{b\, dx}{y(x-a)}\right)$ where $\phi(x) = x^p$. For case (2), suppose $A$ is defined over a degree
$d$ extension $K/\Q_p$ and we have a working precision of $n$ digits, so that $A$ has precision
$dn$ in terms of a uniformizer $\pi$ of $K$. As in \cite[Corollary 6.4]{Bes-Bal10}, let
$m''$ be such that $dm'' + 1 - \lfloor \log(dm'' + 1) \rfloor \geq n$. Then computing
$\beta$ in terms of a local coordinate $(x(t),y(t))$ at a point $B$ in the disk of $A$
defined over $\Q_p$, we have that $\beta$ can be truncated modulo $t^{dm''}$ for
$\sum_{U_A} \Res(\alpha\int\beta)$ to be correct to $n$ digits, where $U_A$ is the residue
disk containing $A$. For case (3), we use \cite[Lemma 4.2]{BBM14}  to first rewrite $\int \beta$, and then use the truncation in case (2).

Lastly, we analyze the precision of $\int_{Q}^{\phi(Q)} \frac{b\, dx}{y(x-a)}$. We take
this opportunity to make a small correction to the discussion above \cite[Lemma
4.2]{BBM14}; indeed, the same technique described in \cite[Lemma 4.2]{BBM14} is also used
to compute this integral, since the residue divisor of $\frac{b\, dx}{y(x-a)}$ is $(P) -
(w(P))$, and by construction, $Q, \phi(Q)$, and $P$ are all in the same residue disk.  So by computing the integrand in terms of a local coordinate at $Q$ and truncating modulo $t^{m'}$ as above, we have that the integral will be correct to $n$ digits of precision.
\end{proof}

\subsection{Computing the intersection patterns}\label{sec:T}
We now discuss the computation of a set $T\subset\Q_p$ which contains the values of $\rho$
on $p$-integral points. 
By Lemma~\ref{L:tdec}, it suffices to compute the set $\TT$ of intersection patterns,
since we can take $T=\log_p(\TT)$. 
Hence we compute the sets $T(q)$ of possible values of $D_{P,q}^2$
(see~\eqref{E:dp}), as $P$ runs through $\UU(\Z_q)$, where $q$ is a prime of bad reduction.
In fact, it follows from Lemma~\ref{L:tdec} that it suffices to do this for $q=2$ and for primes $q$
which divide the leading coefficient of $f$ or satisfy $v_q(\disc(f))\ge 2$, since all
other primes are not very bad.

So let $q$ be such a prime; we will omit it from the notation in the following. 
We first compute a strong desingularization $\XX$ of the Zariski closure of $X\times\Q_q$ over $\Spec(\Z_q)$.
Suppose that, as a divisor, $\XX_q$ is given by $\sum^n_{i=1}
a_i\Gamma_i$, where $\Gamma_1,\ldots,\Gamma_n$ are the irreducible components of $\XX_q$
and $a_1,\ldots,a_n$ are positive integers.
For $i,j \in \{1,\ldots,n\}$ let $m_{ij}=(a_i\Gamma_i\,.\,a_j\Gamma_j)$, so that
$M=(m_{ij})_{i,j}$ is the intersection matrix of $\XX_q$.
Let $M^+$ denote the Moore-Penrose pseudoinverse~\cite{penrose:inverse} of $M$.

Recall the adjunction formulas on $\XX_q$: If $\KK\in\Div(\XX)\otimes\Q$ is a canonical
$\Q$-divisor on $\XX$ (meaning $\O(\KK)$ is isomorphic to the relative dualizing sheaf of $\XX$
over $\Spec(\Z_q)$),
then we have
\begin{equation}\label{adj-form}
    (\KK\,.\,\Gamma_i) = -\Gamma_i^2 + 2p_a(\Gamma_i) - 2
\end{equation}
for all $i$, where $p_a(\Gamma_i)$ denotes the arithmetic genus of $\Gamma_i$.

For $P \in \UU(\Z_q)$, we now give simple formulas for $D^2_P$ in terms of linear algebra on the
special fiber $\XX_q$.
Recall from~\cite[Lemma~3.4]{BBM14} and the proof of~\cite[Proposition~3.3]{BBM14} that if
$P \in X(\Q_q)$, then $D^2_P$ is given by
\begin{equation}\label{eq:D_sq-formula}
D^2_P =  \Phi((P)-(\infty))^2 - (P\,.\,\div_\XX(\omega_0)) -
(\infty\,.\,\div_\XX(\omega_{g-1})).
\end{equation}
We consider each term on the right hand side of~\eqref{eq:D_sq-formula} individually,
drawing on and extending~\cite[\S5]{BBM14}.
As these terms only depend on the component that the section $P$ intersects, we can compute
$T(q)$ by computing them for each component of multiplicity~1.

\subsubsection{Computing $\Phi((P)-(\infty))^2$}
Let $u(P)$ denote the column
vector whose $i$th entry is $((P)-(\infty)\,.\,a_i\Gamma_i)$.
Then we have
\begin{equation}\label{phi_formula}
    \Phi((P)-(\infty))^2=u(P)^TM^+u(P)\,.
\end{equation}
by~\cite[(5.3)]{BBM14}.

\subsubsection{Computing $(P\,.\,\div_\XX(\omega_0))$}
Recall from~\cite[\S5.1]{BBM14} that \[\div_\XX(\omega_0) = (2g-2) \infty+ V\,,\] where
$V$ is vertical.
As $P\in\UU(\Z_q)$, the section associated to it does not 
intersect the section $\infty$, so it follows from~\cite[\S5.1]{BBM14} that
\begin{equation}\label{eq:pomv}
  (P\,.\,\div_\XX(\omega_0))= (P\,.\,V) = (P\,.\,V'),
\end{equation}
for such a point $P$. Here $V'$ is the unique vertical $\Q$-divisor such that $(2g-2)
\infty +V'$ is a canonical $\Q$-divisor and such that the support of $V'$ is disjoint from
the component $\Gamma_0$. 
Writing $V' = \sum^n_{i=1}v'_i\Gamma_i$, $v'_i \in \Q$, it therefore suffices to compute
the column vector $v' = (v'_1,\ldots,v'_n)^t$.

Suppose that $\Gamma_{i_0}$ dominates the Zariski closure of $\XX$ and that the section $\infty$
intersects $\Gamma_{i_\infty}$, where $i_0,\, i_\infty \in \{1,\ldots,n\}$.
Then $v'$  can be computed as follows:
Let $b$ denote $(b_1,\ldots,b_n)^t$, where
\[
    b_i = -\Gamma_i^2 + 2p_a(\Gamma_i) - 2 - (2g-2) (\infty\,.\,\Gamma_i).
\]
By~\eqref{adj-form} we have
\[
    v' = M'\cdot b,
\]
where $M'$ is the matrix obtained from $M$ by setting the $i_\infty$-th row and column
equal to~0.

\subsubsection{Computing $(\infty\,.\,\div_\XX(\omega_{g-1}))$}
The divisor of the function $x^{g-1} \in \Qpb(X)$ is equal to $(g-1)D_0-(2g-2)(\infty)$,
where $D_0 = (P_0) + (w(P_0))\in \Div(X\otimes\Q_q)$ for a point $P_0 \in X(\bar{\Q}_q)$ with $x$-coordinate~0.
Thus we have
\[
    \div_\XX(\omega_{g-1}) = (g-1)D_0 + W
\]
for some vertical divisor $W$.
In analogy with~\eqref{eq:pomv}, we have 
\[
    (\infty\,.\,\div_\XX(\omega_{g-1}))= (\infty\,.\,W) = (\infty\,.\,W'),
\]
where 
\[
  W' = \sum^n_{i=1}w'_i\Gamma_i,\;\;w'_1,\ldots,w'_n \in \Q,
\]
is the unique vertical $\Q$-divisor such that $(2g-2)D_0 + W'$ is a canonical
$\Q$-divisor and such that the support of $W'$ is disjoint from the component $\Gamma_0$.
If $c = (c_1,\ldots,c_n)^t$ is defined by
\[
    c_i = -\Gamma_i^2 + 2p_a(\Gamma_i) - 2 - (g-1)D_0\,.\,\Gamma_i),
\]
then it follows from~\eqref{adj-form} that 
\[
     (w'_1,\ldots,w'_n)^t = M'\cdot c.
\]

\subsection{Computing the solutions to $\rho(z) \in T$} \label{sec:solve}
Here we describe how to find all $p$-adic solutions to $\rho(z) \in T$ in $\UU(\Z_p)$ to
precision $p^N$.  
\begin{enumerate}\item We begin by enumerating the list of residue disks not including the
      disk at infinity: that is, the set $X(\F_p) \setminus\infty$. 
\item Compute the dual basis $\{\bom_i\}$.
\item Initialize sets $\setz = \emptyset$ and $\setz = \emptyset$. \item For each disk, we do the following:
\begin{enumerate}\item If the disk is Weierstrass, we take $P$ to be the characteristic 0 Weierstrass point in the disk. If the disk is not Weierstrass, we take $P$ to be an arbitrary lift to $\Z_p$ of the $\F_p$-point.
\item Compute $\tau(P)$, a local coordinate $(x(z),y(z))$ at $P$, and $f_i(z), \tau(z)$ as in $\S\S\ref{sec:fi}-\ref{sec:tau}$.
\item Use the values of $\alpha_{ij}$ as computed in $\S\ref{sec:alphas} $ to produce the power series expansion 
\begin{equation}
    \rho(z) = \tau(P) - 2\sum_{i=0}^{g-1}\left(\int_P^z \omega_i\bom_i+ \frac{1}{2}\int_P^z \omega_i\int_{-P}^P \bom_i\right)- \sum_{0 \le i\le j \le g-1} \alpha_{ij} f_i(z)f_j(z)
\end{equation}
\item For each value of $t \in T$, solve the equation $\rho(z) \equiv  t \pmod{p^N}$. For
  each $z_i$ such that $\rho(z_i) \equiv  t \pmod{p^N}$, check whether $\rho'(z_i) \equiv
  0 \pmod{p^N}$. If $\rho'(z_i) \not\equiv 0 \pmod{p^N}$, use the local coordinate at $P$ to find the corresponding
  $y$-coordinate of the point on $X$ in the disk with $x$-coordinate $x(z_i)$ and append
  $((x(z_i),y(z_i)),1)$ to the set $\setz$. If $\rho'(z_i) \equiv 0\pmod{p^N}$, approximate $m=\ord_{z=z_i}\rho(z)$ using our working
  precision and append $((x(z_i),y(z_i)), m)$ to the set $\setz$. 
  \end{enumerate}
\end{enumerate}


\section{Complementing quadratic Chabauty} \label{sec:cliffhanger}

Using the algorithms described in Section~\ref{sec:computing}, we can compute a finite set of $p$-adic points on $X$ which contains
the set $\UU(\Z)$ of integral points on $X$, provided the conditions of Theorem~\ref{cutting-function} are satisfied.
More precisely, we can compute the set 
$\TT$ of intersection patterns, and we can find all solutions $z \in\UU(\Z_p)$ to the equations
\begin{equation}\label{E:sols}
  \rho(z) = \log_p(d),\;\;d \in \TT
\end{equation}
to $N$ digits of precision.
Theorem~\ref{cutting-function} implies that the set of solutions will contain the set of integral points on $X$.

For every solution $z$ to~\eqref{E:sols}, there are three possible scenarios:
\begin{enumerate}[(a)]
  \item $z$ lifts to a point in $\UU(\Z)$,
  \item $z$ lifts to a point  in $X(\Q)\setminus \UU(\Z)$, or
  \item $z$ does not lift to a point  in $X(\Q)$.
\end{enumerate}
In cases (a) and (b), it is usually trivial to find the corresponding rational point.
The reason is that because of our assumption $g>1$, we do not expect $X(\Q)$ to contain any points of large height.
So quadratic Chabauty leaves us with a list of integral points $\seta\subset \UU(\Z)$, and we want to prove that
this list is complete.

There are essentially two problems we have to solve.
The first one is caused by the fact that we only compute our solutions to our finite
working precision $p^N$.
\begin{problem}\label{prob:nearby}
Given a solution $z_0$ lifting to a point $P_0 \in X(\Q)$, we have to show that there are no integral points $\ne
P_0$ congruent to $z_0$ modulo $p^N$.
\end{problem}
If the power series expansion of $\rho-t$ in the residue disk containing $z_0$ only has a
simple zero at $z_0$, then we are obviously done.

So suppose that 
$\ord_{z=z_0} \rho(z)-t = m >1$.
In practice, our approach is to simply increase precision until $z_0$ separates into $m$ different points.
If this does not work for reasonably small precision $p^{\tilde{N}}$, then we are in one
of the following two cases:
\begin{enumerate}[(i)]
  \item $z_0$ really occurs as a multiple solution of $\rho(z) - t$.
  \item $z_0$ is $p$-adically closer than $p^{\tilde{N}}$ to another point which satisfies $\rho(z) - t = 0$.
\end{enumerate}
In case (ii), we will of course detect this by increasing precision, though we have no \emph{a
priori} bound on when to stop (if $z_0$ is integral, we could derive an enormous bound
using linear forms in logarithms, but this would not be practical).
However, neither (i) nor (ii) occurred in our examples, and we do not expect this to occur in
practice.

Here is a heuristic explanation for this:
If the original expansion of $\rho$ in the disk of $z_0$ did not use $z_0$ as a basepoint, recompute $\rho(z) = t$ at $z_0$.
Call $F(z) = \rho(z) - t$. To state the simplest case of the problem, suppose we are trying to resolve a potential double root; that is, our expansion of $F$ yields $F(z) = a_2z^2 + \cdots$: Either we actually have a double root at $z_0$ or $a_1$ is $p$-adically too small to show up as a coefficient of the power series, given our working precision.
Note that each element of the dual basis $\bom_i$ is a $\Q_p$-linear combination of the
$g$ regular 1-forms (from the projection along the unit root subspace, using Frobenius)
and a $\Q$-linear combination of the $g$ meromorphic 1-forms $\{\omega_{g}, \ldots \omega_{2g-1}\}$ (from the cup product matrix).  So, using a certain choice of cotangent vector $b_0$ where necessary, as in \cite[$\S\S 2-4$]{BBM14}, we have 
\begin{align*}F(z) &:=  \rho(z) -t\\
 &= -2\int_{b_0}^z\sum_{i=0}^{g-1} \omega_i \bom_i
-\sum_{0\le i\leq j\leq g-1}\alpha_{ij}f_i(z)f_j(z)-t\\
&= -2\int_{b_0}^z\sum_{i=0}^{g-1} \omega_i\sum_{j = g}^{2g-1}b_{ij}\omega_j
-2\int_{\infty}^z\sum_{i=0}^{g-1} \omega_i\sum_{j = 0}^{g-1}b_{ij}\omega_j -\sum_{0\le i\leq j\leq g-1} \alpha_{ij}\int_{\infty}^z\omega_i\int_{\infty}^z\omega_j -t\\
&= ( q_1z + q_2 z^2 + \cdots) + ( s_1 z + s_2 z^2 + \cdots),
\end{align*}
where the first power series has rational coefficients $q_i$ (the sums of $\Q$-linear combinations of double integrals which are computed using a local coordinate expansion at the integral point $z_0$)  and the second power series has $p$-adic coefficients $s_i$. This means that $F(z)$ has linear coefficient 0 or very close to 0, that is, either $q_1 = s_1$ (and that $s_1$ is actually a rational number) or $q_1 - s_1 \approx 0$, or that $s_1 \approx q_1$, that is, $s_1$ is very close to a rational number, both of which are scenarios that seem highly unlikely.

So Problem~\ref{prob:nearby} should not lead to any practical difficulties.
However, there is a much more serious obstacle we have to overcome.
\begin{definition}\label{D:f}
  Let $\calF_p$ denote the set of solutions to the equations~\eqref{E:sols} which do
not lift to a $\Q$-rational point to our working precision.
\end{definition}
As we do not expect any $\Q$-rational points of large height, we have to solve the
following problem.
\begin{problem}\label{prob:fake}
  Show that no element of $\calF_p$ lifts to an integral point.
\end{problem}
By the discussion above, it suffices to solve Problem~\ref{prob:fake} in order to 
provably find all integral points on $X$.
So how can we attack Problem~\ref{prob:fake}?
This question will be answered in Section~\ref{sec:global}, after we establish the basics of the Mordell-Weil sieve in the next section.


\section{The Mordell-Weil sieve}\label{sec:mw-sieve}
From now on, we suppose that the genus $g$ of $X$ is at least~2.  We recall the Mordell-Weil sieve, a method to obtain results on rational points of $X(\Q)$ 
using information about $J(\Q)$ and information over finite fields.
It was developed by Scharaschkin in his PhD thesis~\cite{Scharaschkin:Thesis} and 
adapted and applied to great success by a number of authors, for instance Flynn~\cite{Flynn:Hasse}, Poonen-Schaefer-Stoll~\cite{PSS:Twists}
and Bruin-Stoll~\cite{Bruin-Stoll:Experiment, Bruin-Stoll:MWSieve}. 
The Mordell-Weil sieve is particularly useful for proving that $X(\Q)$ is empty, but it
has many other applications as well, see~\cite[\S4.2]{Bruin-Stoll:MWSieve}.
In our discussion we follow~\cite{Bruin-Stoll:MWSieve}.

The general idea is as follows:
Let $M$ be a positive integer and let $$\pi \colon J(\Q) \to J(\Q)/MJ(\Q)$$ be the canonical epimorphism.
Suppose that $C_M \subset J(\Q)/MJ(\Q)$ is a set of residue classes for which we want to
show that no rational point $P \in X(\Q)$ maps to $C_M$ under $\pi\circ\iota$.
In the simplest case we pick a prime $v$ of good reduction and consider the following commutative
diagram:
   \[\xymatrix{
       X(\Q)\ar@{->}[r]^-{\pi\circ\iota} \ar@{->}[d]&
        J(\Q)/MJ(\Q)\ar@{->}[d]^{\alpha_v}\\
        X(\F_v)\ar@{->}[r]_-{\beta_v}& J(\F_v)/MJ(\F_v) } \]
Here $\alpha_v$ is induced by the reduction map $J(\Q)\to J(\F_v)$ and $\beta_v$ is
induced by $\iota \colon X(\F_v) \hookrightarrow J(\F_v)$.
If $\alpha_v(C_M) \cap \beta_v(X(\F_v))=\emptyset$, then we are obviously done.

More generally, we can apply this argument to several primes of good reduction at once:
Let $S$ be a finite set of such primes and consider
the commutative diagram
\[
    \xymatrix{
    X(\Q)\ar@{->}[r]^-{\pi\circ\iota}  \ar@{->}[d]&
    J(\Q)/MJ(\Q)\ar@{->}[d]^{\alpha_S}\\
    \prod_{v\in S}X(\F_v)\ar@{->}[r]_-{\beta_S}&\prod_{v\in S} J(\F_v)/MJ(\F_v)\,.\\
} \]
Then it suffices to show that
\[
    \alpha_S(C_M) \cap \beta_S\left(\prod_{v\in S}X(\F_v)\right)=\emptyset\,.
\]
In other words, we want to find $S$ such that 
\begin{equation}\label{E:ascm}
    A(S,C_M) =  \left\{c \in C_M : \alpha_S(c) \in \beta_S\left(\prod_{v\in
    S}X(\F_v)\right)\right\}
\end{equation}
is empty.
Heuristically, we can estimate the size of $A(S,C_M)$ as follows:
For a prime $v$ of good reduction, we denote by $X_{M,v}=\beta_v(X(\F_v))$ the image of $X(\F_v)$ in
$J(\F_v)/MJ(\F_v)$ and we set
\[
  \gamma(v,M) = \frac{\#X_{M,v}}{\#J(\F_v)/MJ(\F_v)}.
\]
Note that $v$ can only be useful for our purposes if $\gamma(v,M)<1$.
The expected size of $A(S, C_M)$ is
\begin{equation}\label{eq:nsc}
  n(S,C_M) = \#C_M\prod_{v \in S} \gamma(v,M),
\end{equation}
so we want $n(S,C_M)$ to be small.

\begin{remark}\label{R:eff}
In addition to choosing $S$, we can of course also increase $M$ by taking some multiple
$M'$ of $M$ and constructing the set $C_{M'}\subset J(\Q)/M'J(\Q)$ of residue classes which
map to $C_M$ via the Chinese remainder theorem.
It seems very difficult to prove that there are
choices of $S$ and $M'$ so that this approach will show
\[
  \pi(\iota(X(\Q)))\cap C_M = \emptyset
\]
as desired, so that at present the Mordell-Weil sieve is not known to be effective.
See Poonen~\cite{Poonen:Exp} for related heuristics, which predict, in particular, that
such a choice should always exists for curves that have $X(\Q) = \emptyset$.
\end{remark}

We refer to Bruin and Stoll~\cite{Bruin-Stoll:MWSieve} for a thorough discussion of
practical aspects of the Mordell-Weil sieve.
We finally mention that it is also possible to use primes of bad reduction and ``deep''
information for the Mordell-Weil sieve, but we have not tried to use this for our
purposes.
See~\cite{Bruin-Stoll:MWSieve} on a description of these ideas for genus~2.

The practicality of the Mordell-Weil sieve is discussed in~\cite[\S\S7--8]{Bruin-Stoll:MWSieve}.
Essentially, two problems can arise.
First, the set $A(S,C_M)$ can become too large. 
Secondly, the computation of the images of the sets $X(\F_v)$ in $J(\F_v)$ can become infeasible. 
The latter can occur because that computation requires approximately $v$ discrete
logarithms in the group $J(\F_v)$.
We can avoid this issue by only considering primes such that $\#J(\F_v)$ is sufficiently
smooth, meaning that it has no large prime divisors, as in this case we can apply
Pohlig-Hellman reduction~\cite{pohlig-hellman}, so that only a small number
of relatively easy discrete logarithms have to be computed.


\section{Combining quadratic Chabauty and the Mordell-Weil sieve}\label{sec:global}
Now we show how quadratic Chabauty can be combined with the Mordell-Weil
sieve to provably compute all integral points on $X$, where we assume $X$ to have genus at
least~2. See Appendix~\ref{sec:elliptic} for a brief discussion of the genus~1 case.

Recall that in Section~\ref{sec:computing} we discussed algorithms to compute a
finite subset of $\UU(\Z_p)$ to $N$ digits of precision which contains the set $\UU(\Z)$ of integral points, provided the
conditions of Theorem~\ref{cutting-function} are satisfied.
Having identified some of these $p$-adic points as actual rational points, we showed in 
Section~\ref{sec:cliffhanger} that in order to provably compute all integral points, it suffices to 
solve Problem~\ref{prob:fake}:
We need to prove that the set $\mathcal{F}_p$ of remaining $p$-adic points
do not lift to integral points.
In fact, we will discuss how to prove that they do not lift to rational points by 
translating this into a problem that we can solve using the Mordell-Weil sieve.

For simplicity, we first assume that the torsion subgroup of $J(\Q)$ is trivial and that we have
generators $P_1,\ldots,P_g$ of $J(\Q)$ available.
See Remarks~\ref{R:tors} and~\ref{R:index} on how to remove these assumptions.

Suppose that $z \in \UU(\Z_p)$ is accurate to $N$ digits of precision.
If $z$ lifts to rational point $P\in X(\Q)$, then there are integers $a_1,\ldots,a_g$ such that 
\begin{equation}\label{eq:zais}
  \iota(P) = a_1P_1+\ldots+a_gP_g.
\end{equation}
By properties of the Coleman integral, this implies
\[
    f_i(z) = \int^z_{\infty}\omega_i =a_1\int_{P_1}\omega_i+\ldots+
    a_g\int_{P_g}\omega_{i}\;\textrm{ for all } i \in \{0,\ldots,g-1\}.
\]
Let $(\tilde{a}_1,\ldots,\tilde{a}_g) \in \Z/p^N\Z$ be defined by
\begin{equation}\label{eq:sle}
  \left(\begin{array}{c}
    \tilde{a}_1\\
      \vdots\\
      \tilde{a}_g
    \end{array}\right) = 
  \left(\begin{array}{ccc}
      \int_{P_1}\omega_0  & \cdots & \int_{P_g}\omega_0\\
                   \vdots & \ddots & \vdots\\
  \int_{P_1}\omega_{g-1}  & \cdots & \int_{P_g}\omega_{g-1}\\
    \end{array}\right)^{-1}\cdot\left(\begin{array}{c}
        \int^z_{\infty} \omega_0\\
        \vdots\\
        \int^z_{\infty} \omega_{g-1}\\
    \end{array}\right).
\end{equation}
Note that if~\eqref{eq:zais} holds, then we must have $a_i \equiv \tilde{a}_i \pmod{p^N}$ for all
$i\in \{1,\ldots,g\}$.

So, if we are given $z$ as above, and we want to prove that $z$ does not lift to a
rational point, we compute $(\tilde{a}_1,\ldots,\tilde{a}_g) \in \Z/p^N\Z$, noting 
that if $d$ is the valuation of the determinant of the matrix we are inverting, 
we need its entries to precision $p^{N+d}$. 
Then it suffices to show that there is no tuple $(a_1,\ldots,a_g) \in \Z^g$ such that
\begin{enumerate}[(i)]
  \item $a_i \equiv \tilde{a}_i \pmod{p^N}$ for all $i \in\{1,\ldots,g\}$,  and
    \item $a_1P_1+\ldots+a_gP_g$ is in the image of $X(\Q)$ in $J(\Q)$ under the embedding
        $\iota$.
\end{enumerate}
The tuple $(\tilde{a}_1,\ldots,\tilde{a}_g)$ induces a residue class $c \in J(\Q)/p^NJ(\Q)$ via
\[
  c = \tilde{a}_1P_1+\ldots+\tilde{a}_gP_g;
\]  
we say that $c$ {\em corresponds to $z$}.

Then, denoting by $\pi \colon J(\Q) \to J(\Q)/p^NJ(\Q)$ the canonical
epimorphism, we want to show that there is no point $P \in X(\Q)$ whose image under
$\pi\circ\iota$ maps to the residue class $c$ in $J(\Q)/p^NJ(\Q)$ corresponding to
$z$ -- which is precisely the type of problem the Mordell-Weil sieve is likely to
solve! We set $M=p^N$, let $C_M\subset J(\Q)/MJ(\Q)$ denote the set of residue classes corresponding to
elements of $\mathcal{F}_p$ and look for a set $S$ of primes of good reduction such that
$A(S,C_M)=\emptyset$, where $A(S,C_M)$ is defined in~\eqref{E:ascm}.

Unfortunately, as stated, this approach is not likely to succeed in practice. The reason
is that for $M$ a prime power the set $A(S,C_M)$ is unlikely to be empty, 
as we cannot expect $\#J(\F_v)$ to be divisible by $p$ for many small primes $v$.
Of course, we can simply take a multiple $M'$ of $p^N$ as discussed in Remark~\ref{R:eff}.
However, we would like to apply our method to curves of genus (and hence rank) at least~4,
so this strategy would likely lead to large sets $A(S',C'_M)$. Moreover, the discrete
logarithms needed to compute the images of $X(\F_v)$ in $J(\F_v)$ would become infeasible,
since the shape of $M$ would force us to consider primes $v$ such that $\#J(\F_v)$ has large
prime divisors and the computation of discrete logarithms (in fact explicit computations
in $J(\F_v)$ in general) becomes slow as $g$ and $v$ become large.

Fortunately, it turns out that we can actually do much better.
Namely, we use the following strategy:
We combine the information coming from quadratic Chabauty for {\em several} primes of good ordinary
reduction $p_1,\ldots,p_s$ to $N_1,\ldots,N_s$ digits of precision, respectively.
Let $m$ be a small integer coprime to the $p_k$ and set
\begin{equation}\label{E:m}
M=m \cdot p_1^{N_1}\cdots p_s^{N_s}.
\end{equation}
If $P \in \UU(\Z)$, then there is an intersection pattern $d \in \TT$ such that we have 
\[
  \rho_{p_k}(P) = \log_{p_k}(d)\;\textrm{ for all }k \in \{1,\ldots,s\}.
\]
We can partition $\mathcal{F}_{p_k}$ into subsets 
\[
  \mathcal{F}_{p_k,d} = \mathcal{F}_{p_k} \cap \rho_{p_k}^{-1}(\log_{p_k}(d)),
\]
where $d$ runs through $\TT$.
For an intersection pattern $d$, we define $C_d$ to be the
set of residue classes $c \in J(\Q)/MJ(\Q)$ such that
$c$ corresponds to an element of $\mathcal{F}_{p_k,d}$ 
for every $k \in \{1,\ldots,s\}$, without any condition modulo $m$.
Finally, we set
\begin{equation}\label{E:cm}
  C_M := \bigcup_{d \in \TT} C_d
\end{equation}
and apply the Mordell-Weil sieve with these choices of $M$ and $C_M$ for some set $S$ of
primes of good reduction so that $A(S,C_M)$ is likely to be empty.

\begin{remark}\label{R:tors}
If there is nontrivial rational torsion on $J$, then we have to replace $C_M$ by the set
of all classes $c+Q \in J(\Q)/MJ(\Q)$, where $c \in C_M$ and $Q$ runs through $J(\Q)_{\mathrm{tors}}\setminus\{O\}$.
This increases the size of $C_M$, but otherwise does not cause any problems.
\end{remark}

\begin{remark}\label{R:index}
If the points $P_i$ only generate a subgroup of finite, but unknown index, then  it suffices to show that
this index is coprime to all $v \in S$ used in the Mordell-Weil sieve computation \emph{a
posteriori}, because then $\langle P_1,\ldots, P_g\rangle$ and
$J(\Q)/J(\Q)_{\mathrm{tors}}$ have the same image in $J(\F_v)$.
\end{remark}


\section{Choosing the parameters for quadratic Chabauty and the Mordell-Weil sieve}\label{sec:applying_mws}
The previous section described a general approach to combining quadratic Chabauty with the
Mordell-Weil sieve in order to provably compute all integral points on $X$.
Now we discuss how to make this practical.
To this end, we need to establish how to choose the various parameters
for the Mordell-Weil sieve and quadratic Chabauty computations.
More precisely, we need to choose
\begin{itemize}
  \item the primes $p_1,\ldots,p_s$ of good ordinary reduction,
  \item the numbers $N_1,\ldots,N_s$ of digits of precision,
  \item the integer $m$,
  \item the set $S$ of primes of good reduction,
\end{itemize}
so that the set $A(S,C_M)$ has a good chance to be empty, where $M=m\cdot p_1^{N_1}\cdots p_s^{N_s}$
and $C_M$ is defined in~\eqref{E:cm}.
If we can then show that $A(S,C_M)$ is indeed empty, we have proved that our
list of known integral points is complete.
To this end, we will make all the choices listed above at the same time, as opposed to
choosing the $p_k$, the $N_k$ and $m$ \emph{a priori} and then looking for a 
suitable set $S$.

With notation as in Section~\ref{sec:mw-sieve}, the expected size 
of the set $A(S,C_M)$ is
$$n(S,C_M) =  \#C_M\prod_{v \in S} \gamma(v,M).$$
We would like to find $S$ and $M$, so that $n(S,C_M)$ is small, where $C_M$ is as in~\eqref{E:cm}, before we actually start
the quadratic Chabauty computations.

So suppose we have candidate $S$ and $M$.
It is straightforward to compute $\gamma(v,M)$ for given $v$ and $M$,
see~\cite[\S3]{Bruin-Stoll:MWSieve}.
However, the size of $C_M$ is given by
\[
  \#C_M = \#J(\Q)_{\mathrm{tors}}/MJ(\Q)_{\mathrm{tors}}\cdot
  \#J(\Q)/mJ(\Q)\cdot\sum_{d \in
  \TT}\prod^s_{k=1}\#\mathcal{F}_{p_k,d}.
\]
Hence we cannot compute $\#C_M$ without already knowing the sizes of the sets
$\mathcal{F}_{p_k,d}$ for the various $d \in \TT$. 
But these sets come out of the quadratic Chabauty computation for $p_k$!
So instead, we settle for an integer $\theta(C_M)$ such that we expect $$\#C_M \le\theta(C_M)$$
to hold, using the following strategy:

We first compute the very bad primes and the set $\TT$ of intersection patterns.
Then we find all small integral points on $X$, say of logarithmic height at most $B_2$.
For every such point $P$, we use~\eqref{eq:rho_away} to find the element $d \in \TT$ such that 
$\rho_{p_k}(P) = \log_{p_k}(d)$ for all primes $p_k$. Let $\sigma_d$ denote the number of known integral points
corresponding to $d$; this can be computed easily once we have computed the regular models
$\XX$ as in Section~\ref{sec:T}.
Experimental data suggests that we should expect at most two solutions to the equation
\[\rho_{p_k}(z) = \log_{p_k}(d)\] in every affine residue disk.
So we expect 
\[
\#\mathcal{F}_{p_k,d} \le
  \sum_{d \in \TT} (2\cdot\#X(\F_{p_k}) - 2 -
  \sigma_d)
\]
and hence $\#C_M \le\theta(C_M)$, where
\[
  \theta(C_M) =\#J(\Q)_{\mathrm{tors}}/MJ(\Q)_{\mathrm{tors}}\cdot
  \#J(\Q)/mJ(\Q)\cdot\sum_{d \in
  \TT}\prod^s_{k=1}\left(2\cdot\#X(\F_{p_k})-2-\sigma_d\right) 
\]
Thus we expect the size of $A(S,C_M)$ to be at most
\begin{equation}\label{eq:del}
  n'(S,C_M) =  \theta(C_M) \cdot \prod_{v \in S}
  \gamma(v,M).
\end{equation}
So our goal is to find $S$ and $M$ as above such that $n'(S,C_M)$ is small.
Our approach to this is as follows:
\begin{enumerate}[(a)]
  \item Fix positive constants $\varepsilon$ and $B_1$, let $p_1,\ldots,p_u$ denote the primes $\le B_1$
    of good ordinary reduction, and set $S=\emptyset$.
  \item Append primes $v$ of good reduction to $S$ such that $\#J(\F_v)$ is divisible by at least
    one prime $p_k$ and only has prime divisors below some chosen bound, and such that we have $n'(S',C_{M'})<\varepsilon$, where 
    \begin{itemize}
      \item $M' = M_{l-g-1} = m'\cdot p_1^{N_1}\cdots p_u^{N_u}$,
      \item $\prod_{v \in S} J(\F_v) \cong \Z/M_{1}\Z\times\cdots\times\Z/M_l\Z$, and
       \item  $M_j\mid M_{j+1}$ for $j = 1,\ldots,l-1$.
    \end{itemize}
  \item Find $M=m\cdot p_1^{N_1}\cdots p_s^{N_s}\mid M'$ and $S' \subset S$ such that $n'(S,C_M)<\varepsilon$ and such that
    $A(S,C_M)$ can be computed efficiently.
\end{enumerate}
This is essentially analogous to the techniques proposed by Bruin and Stoll
in~\cite{Bruin-Stoll:MWSieve}.
In particular, see~\cite[\S3.1]{Bruin-Stoll:MWSieve} for an argument why $M'=M_{l-g-1-j}$ is
likely to yield small values of $n(S',C_{M'})$ and~\cite[\S3.2]{Bruin-Stoll:MWSieve} for
criteria which guarantee that $A(S,C_M)$ can be computed efficiently.
Their arguments are detailed for the case $C_M = J(\Q)/MJ(\Q)$, but can be adapted easily
to remain valid in our situation.
In particular, we restrict to primes $v$ such that the prime divisors of $\#J(\F_v)$ are not too large 
to keep the discrete logarithms needed in the Mordell-Weil sieve computation feasible. 
In practice we usually choose $B_1\approx 50$, so
that the quadratic Chabauty computations are reasonably fast. 
Note that usually only small powers of the $p_k$ will divide some $\#J(\F_v)$, so we
need only a few digits of $p$-adic precision.
Using the strategy outlined above, we can keep a good balance between the size of the sets $C_M$, and a
suitable flexibility in choosing $S$.


\section{A method for computing the integral points when $r=g>1$}\label{sec:algorithm}
We collect the techniques described in this paper into a complete method for computing the
set of integral points on a hyperelliptic curve with Mordell-Weil rank equal to its genus.

\begin{algo}[Computing the set of all integral points on $X$]  $\qquad$

\noindent \textbf{Input:} A separable polynomial $f\in \Z[x]$ of degree $2g + 1 \ge 5$ \begin{itemize}\item that does not reduce to a square modulo any prime number and \item
such that the Jacobian $J$ of the hyperelliptic curve $X$ defined by $y^2=f(x)$ has rank $J(\Q)$ equal to $g$.\end{itemize} 
\noindent We assume that an explicit set of independent non-torsion points $P_1, \ldots, P_g \in J(\Q)$ is readily available.\\
\noindent \textbf{Output:} This procedure either computes the set $\UU(\Z)$ of all
integral points on $X$ or terminates with an error.\\
\begin{enumerate}
  \item Choose positive integers $B_1, B_2$ and a positive real number $\varepsilon$.
  \item Compute the rational points on $X$ of height $\le B_2$ and let $\seta$ denote the
    set of integral points among these.
  \item Compute the very bad primes and the set $\TT$ of intersection
    patterns as in $\S\ref{sec:T}$.
  \item Compute the torsion subgroup $J(\Q)_{\mathrm{tors}}$.
  \item Using the strategy discussed in $\S\ref{sec:applying_mws}$, find a set $S$ of
    primes of good reduction and a positive integer $M=m\cdot p_1^{N_1}\cdots p_s^{N_s}$
    such that
    \begin{enumerate}
      \item every $p_k$ is a good ordinary prime $\le B_1$;
      \item $\gcd(p_1\cdots p_s,m)=1$; 
      \item  we have 
    \[
      n'(S,C_M) < \varepsilon,
    \]
    where $C_M$ is as in~\eqref{E:cm}.
    \end{enumerate}
  \item For every $p \in \{p_1,\ldots,p_s\}$, do the following: 
        \begin{enumerate}
          \item Using the estimates of $\S\ref{sec:computing}$, find the working
            precision $p^{N'}$ needed for the results of 7aiii) to be correct to $N$
            digits.
          \item Compute the constants $\alpha_{ij}$ as in $\S\ref{sec:alphas}$. If we
            find that the functions $f_0,\ldots,f_{g-1} \colon J(\Q)\otimes\Q\to\Q_p$ are dependent, use
            Chabauty's method  to compute $X(\Q)$ and exit.
          \item Compute a power series expansion of $f_i(z)$ and $\tau(z)$ in
            every residue disk as in $\S\S\ref{sec:fi}-\ref{sec:tau}$.
        \end{enumerate}
  \item For every $d \in \TT$ do the following:
    \begin{enumerate}
      \item For every $p \in \{p_1,\ldots,p_s\}$, do the following: 
        \begin{enumerate}
      \item Compute the set $\setz$ of solutions (with multiplicities)  to $\rho(z) =
        \log_{p} (d)$ as in $\S\ref{sec:solve}$.
      \item If there is a solution $z_0\in\setz$ with multiplicity $>1$ which lifts to
        a rational point, increase the working precision $p^{N'}$.
        If $N'$ exceeds a pre-determined bound, exit with an
        error;  else go to Step~(6b). 
      \item For every solution $z$ lifting to a point $P\in \UU(\Z)$, set $\seta
        = \seta\,\cup\, \{P\}$.
      \item Compute the residue classes $c \in J(\Q)/p^NJ(\Q)$ corresponding to the points
        $z \in \calF_{p_k,d}$.
    \end{enumerate}
\item Apply the Chinese remainder theorem to find the set $C_d$
  containing all residue classes $c\in J(\Q)/MJ(\Q)$ such that $c$ corresponds to an
  element of $\calF_{p_k,d}$ modulo $p_k^{N_K}$ for all $k$.
\item For every $c' \in J(\Q)_{\mathrm{tors}}/MJ(\Q)_{\mathrm{tors}}\setminus\{O\}$ and
  every $c \in C_d$, append 
  $c + c'\in J(\Q)/MJ(\Q)$ to $C_d$.
    \end{enumerate}
  \item Set $C_M = \bigcup_{d \in \TT} C_d$.
  \item Compute $A(S,C_M)$ using the Mordell-Weil sieve.
  \item  If $A(S,C_M)= \emptyset$, then: 
        \begin{enumerate}
      \item Check if the index
    $(J(\Q)/J(\Q)_{\mathrm{tors}}:\langle P_1,\ldots,P_g\rangle)$ is prime to $v$ for
    every $v\in S$ which was used in the Mordell-Weil sieve.
  \item If yes, return $\seta$. 
  \item Otherwise, saturate $\langle P_1,\ldots, P_g\rangle$ at $v$ for every relevant $v
    \in S$ and go to Step (6) with the new points $P_1,\ldots,P_g$ resulting from the saturation
    process.
    \end{enumerate}
Otherwise, divide $\varepsilon$ by~5. If $\varepsilon$ is below a
    pre-determined bound, exit with an error; else, go to Step (5). \end{enumerate}
 \end{algo}

Our implementation of Step (3) crucially relies on {\tt
Magma}'s {\tt RegularModels} package, which computes a strong desingularization of the Zariski
closure of $X$.
Our implementation of the choice of $S$ and $M$ (Step (5)) and of the actual Mordell-Weil sieve
computation (Step (9)) is based on {\tt Magma} code of Michael Stoll.
See~\cite[\S3.3]{Bruin-Stoll:MWSieve} for a description of the ideas underlying that
implementation.
The $p$-adic analytic steps in the algorithm were implemented in {\tt Sage}.

Regarding Step (10), it often happens that some primes $v \in S$ are superfluous in
the Mordell-Weil sieve computation, so we do not have to check whether they divide the index.
See~\cite[\S13.7]{PSS:Twists} or~\cite{Flynn-Smart} where it is explained how to show
that a certain prime does not divide the index. 
In practice, we usually expect the index to be~1, so we expect Step (10c) to never occur.
See~\cite{Flynn-Smart} for a method that can be used to saturate at $v$ when the genus
is~2.
Alternatively, if we find that some prime $v$ that was used in the Mordell-Weil sieve
computation divides the index, we can try to rerun Step (9) without $v$. If this fails to
show $A(S\setminus\{v\},C_M)=\emptyset$, we can go back to Step (5), keeping the same
constants $B_i$ and $\varepsilon$, but \emph{a priori} excluding all elements known to divide the
index from $S$.


\section{Examples}\label{sec:examples}
Here we give some examples illustrating the techniques described in this paper.
We start by finishing the computation of the integral points on a genus~2 curve, to which
we applied quadratic Chabauty in~\cite{BBM14}.
Then we compute the integral points on a modular curve of genus~3.
Finally, we do the same for a curve of genus~4, which does not seem to be amenable to any
techniques previously available.

\begin{example}
Let $X$ be the genus 2 curve
\[y^2 = x^3(x-1)^2 + 1.\]
In~\cite[\S7.2]{BBM14} we showed the following:
\begin{enumerate}[(i)]
  \item The Jacobian $J$ of $X$ has rank~2 over $\Q$.
  \item $J(\Q)_{\mathrm{tors}} =\{O\}$.
  \item The points $P_1 = [(P) -(\infty)]$ and $P_2 = [(Q) - (R)]$ generate $J(\Q)$, where
    $P = (2,-3), \, Q =(1, -1),\, R =(0,1)$.
  \item The only very bad prime is~2.
\item We have $\TT = \{0, \frac{1}{2}, \frac{2}{3}\}$.
\end{enumerate}
We then computed the solutions to $\rho(z) \in T$ at $p=11$.
Among these solutions, the only ones that appeared to be actual integral points were $
P,\, Q,\, R$ and their images under $w$.

Let us now prove that these are indeed the only integral points on $X$.
Note that this has been proved already by Michael Stoll using the methods
of~\cite{BMSST08}.
We apply quadratic Chabauty for the primes $p_1 = 5$ and $p_2 = 11$, to respective
precision $N_1= 4$ and $N_2 =6$
and run the Mordell-Weil sieve with $M =5^4\cdot 11^6$ and $S = \{17, 863, 7193\}$.
After taking out residue classes containing integral points, we are left with
{209} residue classes in $J(\Q)/MJ(\Q)$; the Mordell-Weil sieve computation then shows that {none} of the 209 residue classes
contain the image of a rational point on the curve.
This proves that our list of integral points is complete.
\end{example}

\begin{example}\label{g3}
Let $X$ be the genus 3 hyperelliptic curve $$y^2 = (x^3+x+1)(x^4+2x^3-3x^2+4x+4).$$ 
This is the curve $C_{496}^J$, a \emph{new modular curve} of level~496
with $\Q$-simple Jacobian in the language  of~\cite{BGGP:Finiteness}.
Applying~2-descent as described in~\cite{stoll:2-descent} and implemented in~{\tt Magma}, we find that the Jacobian $J$ of $X$ has rank~3 over $\Q$. 
The torsion subgroup has order~2, the nontrivial element coming from the
factorisation $(x^3+x+1)(x^4+2x^3-3x^2+4x+4)$. 
Let $P = (-1, 2),\, Q = (0, 2),\, R = (-2, 12),\, S = (3, 62).$ 
We want to prove that, up to the hyperelliptic involution, these are the only integral points on $X$.

The points on the Jacobian represented by differences of known rational points on $X$
generate a subgroup of $J(\Q)/J(\Q)_{\mathrm{tors}}$ of finite index; an explicit set of
generators of this subgroup is given by $$\{P_1 = [(P)-(\infty)],\, P_2 =
[(S)-(w(Q))],\, P_3 = [(w(S)) - (R)]\}.$$
As we do not need it for our method, we did not check whether these points generate the Mordell-Weil group modulo
torsion; however, this should be possible using the techniques described in~\cite{Stoll:KummerG3}.

We apply the method from Section~\ref{sec:algorithm}; 
leading to the following choices:
\begin{enumerate}[(i)]
  \item $M=3\cdot7^3\cdot 17^3\cdot 37^2$, 
  \item $S = \{  5, 41, 607, 617, 1861, 11131, 17209 \}$.
\end{enumerate}
The very bad primes are $2$ and $31$ and the set of intersection patterns is
$$\TT = \left\{(a,b)\, : \,a \in \left\{0, 1,\frac{5}{4}, \frac{7}{4}\right\}, b \in
\left\{0,\frac{1}{2}\right\}\right\}.$$

Now we give some detail on the quadratic Chabauty computation for $p_1=7$. 
We find the following $\Z_7$-points having $\rho$-values in the set $T = \log_7(\TT)$.

\begin{center}
 \begin{tabular}{||c |  r  | c ||}
    \hline
  disk & $x(z)$ & $\rho(z)$ \\
  \hline
$\overline{(3,\pm 1)}$ & $3 + 3 \cdot 7 + O(7^{3})$   &  $0$  \\
& $3 + 3 \cdot 7 + 3 \cdot 7^{2} + O(7^{3})$ & $\frac{1}{2}\log_7(31)$ \\
& $3 + 2 \cdot 7 + 5 \cdot 7^{2} + O(7^{3}) $  & $\log_7(2)$  \\
& $3 + 2 \cdot 7 + 7^{2} + O(7^{3})$  &  $\log_7(2) + \frac{1}{2}\log_7(31)$\\
& $3 + 4 \cdot 7^{2} + O(7^{3}) $ &  $\frac{5}{4}\log_7(2)$\\
& $\mathbf{3 + O(7^{3})} $  &  $\frac{5}{4}\log_7(2) + \frac{1}{2}\log_7(31)$ \\
& $3 + 3 \cdot 7 + 5 \cdot 7^{2} + O(7^{3})  $  &  $\frac{7}{4}\log_7(2)$ \\
& $3 + 3 \cdot 7 + 7^{2} + O(7^{3})$ & $\frac{7}{4}\log_7(2) + \frac{1}{2}\log_7(31)$ \\
\hline
$\overline{(4,\pm 1)}$ &  &   \\
\hline
$\overline{(0,\pm 2)}$ & $4 \cdot 7 + 5 \cdot 7^{2} +  O(7^{3})$ & $0$   \\
& $5 \cdot 7 + 6 \cdot 7^{2} + O(7^{3})$ & $0$\\
& $4 \cdot 7 + 2 \cdot 7^{2} + O(7^{3})$ &
$\frac{1}{2}\log_7(31)$\\
& $5 \cdot 7 + 2 \cdot 7^{2} + O(7^{3}) $ & $ \frac{1}{2}\log_7(31)$\\
& $\mathbf{O(7^3)} $ & $\log_7(2)$\\
& $2 \cdot 7 + 2 \cdot 7^{2}  + O(7^{3}) $ &$\log_7(2)$ \\
& $2 \cdot 7^{2} + O(7^{3})  $ &$\log_7(2)+ \frac{1}{2}\log_7(31)$ \\
& $2 \cdot 7 + O(7^{3})$ & $\log_7(2)+\frac{1}{2}\log_7(31)$\\
& $4 \cdot 7 + O(7^{3})$ & $\frac{7}{4}\log_7(2)$ \\
& $5 \cdot 7 + 4 \cdot 7^{2} + O(7^{3})$ &$\frac{7}{4}\log_7(2)$\\
& $4 \cdot 7 + 4 \cdot 7^{2} + O(7^{3})$ &$\frac{7}{4}\log_7(2) + \frac{1}{2}\log_7(31)$\\
& $5 \cdot 7 + O(7^{3})$ &$\frac{7}{4}\log_7(2) + \frac{1}{2}\log_7(31)$\\
\hline
$\overline{(5,\pm 2)}$ & $5 + 6 \cdot 7 + 7^{2} + O(7^{3}) $ & $0$\\
& $5 + 4 \cdot 7 + 5 \cdot 7^{2} + O(7^{3}) $ & $0$\\
& $5 + 6 \cdot 7 + 4 \cdot 7^{2} + O(7^{3})$ & $\frac{1}{2}\log_7(31)$\\
& $5 + 4 \cdot 7 + 2 \cdot 7^{2} + O(7^{3})$ &  $\frac{1}{2}\log_7(31)$\\
& $5 + 2 \cdot 7 + 2 \cdot 7^{2} + O(7^{3}) $ &  $\frac{5}{4}\log_7(2)$ \\
& $5 + 7 + 2 \cdot 7^{2}  + O(7^{3})$ &  $\frac{5}{4}\log_7(2)$\\
& $5 + 2 \cdot 7 + 7^{2} + O(7^{3}) $ & $\frac{5}{4}\log_7(2) +\frac{1}{2}\log_7(31)$ \\
& $5 + 7 + 3 \cdot 7^{2} + O(7^{3}) $ &  $\frac{5}{4}\log_7(2) + \frac{1}{2}\log_7(31)$\\
& $\mathbf{5 + 6 \cdot 7 + 6 \cdot 7^{2} + O(7^{3})}$ & $\frac{7}{4}\log_7(2)$ \\
& $5 + 4 \cdot 7 + 7^{3} + O(7^{3})$ &  $\frac{7}{4}\log_7(2)$\\
& $5 + 6 \cdot 7 + 2 \cdot 7^{2} + O(7^{3})$ & $\frac{7}{4}\log_7(2)+\frac{1}{2}\log_7(31)$\\
& $5 + 4 \cdot 7 + 4 \cdot 7^{2} + O(7^{3}) $ & $\frac{7}{4}\log_7(2)+\frac{1}{2}\log_7(31)$\\
\hline
   \end{tabular}
\end{center}
\begin{center}
 \begin{tabular}{||c |  r  | c ||}
    \hline
  disk & $x(z)$ & $\rho(z)$ \\

 \hline
$\overline{(6,\pm 2)}$ & $6 + 3 \cdot 7 + O(7^{3}) $  &  $0$ \\
& $6 + 7 + 3 \cdot 7^{2} + O(7^{3})$ & $0$\\
& $6 + 3 \cdot 7 + 3 \cdot 7^{2} + O(7^{3}) $ & $\frac{1}{2}\log_7(31)$ \\
& $6 + 7 +  O(7^{3})$ & $\frac{1}{2}\log_7(31)$\\
& $\mathbf{6 + 6 \cdot 7 + 6 \cdot 7^{2} +O(7^3)}$ & $\frac{5}{4}\log_7(2)$ \\
& $6 + 5 \cdot 7 + 5 \cdot 7^{2} + O(7^{3}) $ & $\frac{5}{4}\log_7(2)$\\
& $6 + 6 \cdot 7 + 5 \cdot 7^{2} + O(7^{3})$ & $\frac{5}{4}\log_7(2)+\frac{1}{2}\log_7(31)$\\
& $6 + 5 \cdot 7 + 6 \cdot 7^{2} + O(7^{3}) $ &  $\frac{5}{4}\log_7(2)+\frac{1}{2}\log_7(31)$\\
& $6 + 3 \cdot 7 + 5 \cdot 7^{2} + O(7^{3})$ & $\frac{7}{4}\log_7(2)$ \\
& $6 + 7 + 5 \cdot 7^{2} + O(7^{3})$ & $\frac{7}{4}\log_7(2)$\\
& $6 + 3 \cdot 7 + 7^{2} +O(7^{3}) $ & $\frac{7}{4}\log_7(2)+\frac{1}{2}\log_7(31)$\\
& $6 + 7 + 2 \cdot 7^{2} +O(7^{3}) $ & $\frac{7}{4}\log_7(2)+\frac{1}{2}\log_7(31)$\\
  \hline
$\overline{(2,\pm 3)}$ & $2 + 7^{2} + O(7^{3})$  & $0$  \\
& $2 + 5 \cdot 7 + 2 \cdot 7^{2} + O(7^{3}) $ & $0$\\
& $2 + O(7^{3})$ &  $\frac{1}{2}\log_7(31)$\\
& $2 + 5 \cdot 7 + 3 \cdot 7^{2} +O(7^{3}) $ &   $\frac{1}{2}\log_7(31)$\\
& $2 + 7 + 4 \cdot 7^{2} +O(7^{3})$ & $\log_7(2)$\\
& $2 + 4 \cdot 7 + 3 \cdot 7^{2} + O(7^{3})$ & $\log_7(2)$\\
& $2 + 7  + O(7^{3})$ &
  $\log_7(2)+\frac{1}{2}\log_7(31)$ \\
& $2 + 4 \cdot 7 +  O(7^{3})$ &
  $\log_7(2)+\frac{1}{2}\log_7(31)$\\
& $2 + 4 \cdot 7^{2} + O(7^{3}) $ &
  $\frac{7}{4}\log_7(2)$\\
& $2 + 5 \cdot 7 + 6 \cdot 7^{2} +O(7^{3})$ & $\frac{7}{4}\log_7(2)$\\
& $2 + 3 \cdot 7^{2} + O(7^{3})$ & $\frac{7}{4}\log_7(2)+\frac{1}{2}\log_7(31)$\\
& $2 + 5 \cdot 7 +  O(7^{3}) $ &
  $\frac{7}{4}\log_7(2)+\frac{1}{2}\log_7(31)$\\
\hline
\end{tabular}
\end{center}
\bigskip

After carrying out the quadratic Chabauty computations for $7,17$ and $37$, we are left with~31488 residue classes modulo $7^3\cdot 17^3\cdot 37^2$ for which we have
to show that they do not contain the image of a rational point.
We also add all residue classes modulo~3, leading to~850176 residue
classes modulo $M = 3\cdot7^3\cdot 17^3\cdot 37^2$. 
Applying the Mordell-Weil sieve, we find that none of these residue classes can contain
the image of a rational point.

Since the index of the subgroup  generated by differences of the known
rational points on $X$ in $J(\Q)/J(\Q)_{\mathrm{tors}}$ is easily shown not to be
divisible by any prime in the set $S$, this proves that
our list of integral points is indeed complete.
\end{example}

\begin{example}\label{g4}
Let $X$ be the hyperelliptic curve $$y^2 = x^4(x-2)^2(x-1)(x+1)(x+2)+4$$ of genus~4. Let 
$P = (0,2),\, Q = (1,2),\, R = (2,-2),\,S = (-1,-2),\, U = (-2,2)$.
We want to show that these points, together with their images under $w$, form the complete
list of integral points.

Using~{\tt Magma}, we show that the rank of the Jacobian over $\Q$ is~4 and that the
torsion subgroup is trivial.
A set of generators of the finite-index subgroup $G$ of $J(\Q)$ generated by differences of the known rational points on $X$
is given by $$\{P_1 = [(P)-(Q)], P_2 = [(R)-(S)], P_3 = [(U)-(w(P))], P_4 = [(w(Q)) -
(w(S))] \}.$$

Because we cannot compute generators for the full Mordell-Weil group in genus~4, the techniques of~\cite{BMSST08} are not applicable.
Moreover, the rank~is equal to the genus, so the method of Chabauty-Coleman is not
applicable.
The $S$-unit techniques mentioned in Section~\ref{sec:previous} fail because in order to
apply them in this case, we would need to compute class groups and fundamental units of number fields of
degree~144. A similar complication arises when trying a combination of covering techniques with
elliptic curve Chabauty.
Hence none of the previously known methods are practical for this example.

First we compute that~2 is the only very bad prime and that
$ \TT = \left\{0,\frac{1}{2}, \frac{12}{7}\right\}$.
We then apply quadratic Chabauty for the good ordinary primes $5,7,11, 13$ and $17$,
to~6,~4,~4,~4 and~4
digits of precision, respectively.
For instance, we find the following $\Z_5$-points having $\rho$-values in the set $T =
\log_5(\TT)$:
\begin{center}
 \begin{tabular}{||c |  r  | c ||}
    \hline
  disk & $x(z)$ & $\rho(z)$ \\
  \hline
  $\overline{(0,\pm 2)}$ & $\mathbf{O(5^6)}$ & $\frac{12}{7}\log_5(2)$ \\
  &$3 \cdot 5 + 5^{2} + 2 \cdot 5^{4} + O(5^{6})$ & $\frac{12}{7}\log_5(2)$\\
  \hline
  $\overline{(1,\pm 2)}$ & $\mathbf{1+ O(5^6)}$& $\frac{1}{2}\log_5(2)$ \\
  &  $1 + 5 + 2 \cdot 5^{2} + 5^{4} + 4 \cdot 5^{5} + O(5^{6})$ & $\frac{1}{2}\log_5(2)$ \\
  & $1 + 4 \cdot 5 + 5^{2} + 2 \cdot 5^{3} + 2 \cdot 5^{5} + O(5^{6})$ &
  $\frac{12}{7}\log_5(2)$\\
  & $1 + 2 \cdot 5 + 2 \cdot 5^{3} + 3 \cdot 5^{4} + 2 \cdot 5^{5} + O(5^{6})$ &
  $\frac{12}{7}\log_5(2)$ \\
  \hline
 $\overline{(2,\pm 2)}$ & $\mathbf{2 + O(5^6)}$ & $\frac{12}{7}\log_5(2)$\ \\
 & $2 + 5 + 2 \cdot 5^{3} + 4 \cdot 5^{5} + O(5^{6})$ &  $\frac{12}{7}\log_5(2)$\\
 \hline
 $\overline{(3,\pm 2)}$   & $3 + 5 + 4 \cdot 5^{2} + 2 \cdot 5^{3} + 5^{4} + O(5^{6})$ & $0$  \\
      & $3 + 5^{2} + 3 \cdot 5^{3} + 2 \cdot 5^{4} + 3 \cdot 5^{5} + O(5^{6})$ &
 $\frac{1}{2}\log_5(2)$ \\
      & $\mathbf{3 + 4 \cdot 5 + 4 \cdot 5^{2} + 4 \cdot 5^{3} + 4 \cdot 5^{4} + 4 \cdot
      5^{5} + O(5^{6})}$ & $\frac{12}{7}\log_5(2)$ \\
      \hline
  $\overline{(4,\pm 2)}$      &  $4 + 5^{2} + 5^{3} + 5^{4} + 3 \cdot 5^{5} + O(5^{6})$ & $0$ \\
          & $4 + 5 + 3 \cdot 5^{4} + O(5^{6})$ & $0$ \\
          & $\mathbf{4 + 4 \cdot 5 + 4 \cdot 5^{2} + 4 \cdot 5^{3} + 4 \cdot 5^{4} + 4
          \cdot 5^{5} + O(5^{6})}$ &  $\frac{1}{2}\log_5(2)$\\
          & $4 + 2 \cdot 5 + 4 \cdot 5^{2} + 5^{3} + 2 \cdot 5^{4} + 5^{5} + O(5^{6})$ &
      $\frac{1}{2}\log_5(2)$\\
\hline
\end{tabular}
\end{center}

\bigskip

This leads to~9660096 residue classes modulo $M = (5\cdot 7\cdot 11\cdot 13\cdot
17)^3$.
Using the set $S=\{13, 19, 83, 103, 167, 727, 971, 2909 \}$, all of whose elements are easily shown
to be coprime to the index $(J(\Q) : G)$, the Mordell-Weil sieve proves that none of these residue classes
contain the image of a rational point on $X$.
\end{example}

\appendix
\section{Computing all integral points on elliptic curves using quadratic Chabauty}\label{sec:elliptic}
Let $X$ be an elliptic curve of rank~1 over $\Q$.
For simplicity, we also assume that $X(\Q)$ is torsion-free.
As described in Section~\ref{sec:previous}, there exist quite efficient algorithms for the
computation of the integral points on $X$; the most practical ones are based on elliptic
logarithms.
Nevertheless, we can use quadratic Chabauty, 
for this purpose as well, as sketched below.

Suppose we have already found the set $\seta\subset \UU(\Z)$ of suspected
integral points on $X,$ and
our goal is to prove the equality $\seta= \UU(\Z)$. 

Let $P$ be the generator of $X(\Q)$. 
The first helpful observation is that we 
only have to consider residue disks not corresponding to affine multiples of the reduction $\tilde{P}$ of
$P$ in the quadratic Chabauty computation.

We define the function
$\log_P$ on $X(\Q_p)$ by
\[
  \log_P(z) = \log_{P,p}(z) = \frac{\int_{\infty}^z \omega_0}{\int_{\infty}^P \omega_0},
\]
noting that this is indeed the discrete logarithm with respect to $P$
for points in $X(\Q)$.

Let $\TT$ be the set of intersection patterns. 
We try to prove that $\seta= \UU(\Z)$ by showing, for each $d\in \TT$,
that there are no points in $\UU(\Z)-\seta$ with intersection pattern $d$.

We use quadratic Chabauty for several primes. Let us first discuss the case of~2 primes.
Following Silverman-Stange~\cite{Silverman-Stange}, we call a pair of distinct primes $(p_1,p_2)$ {\em
amicable}  if 
$$p_2=\#{X}(\F_{p_1})\;\;\textrm{and}\;\;{p_1}=\#{X}(\F_{p_2}).$$
More generally, we call a pair of distinct primes $({p_1},p_2)$ {\em
pseudo-amicable} if
$$p_2\mid\#{X}(\F_{p_1})\;\;\textrm{and}\;\;{p_1}\mid\#{X}(\F_{p_2}).$$

Suppose that ${p_1}$ and $p_2$ are primes of good ordinary reduction such that
$({p_1},p_2)$ is pseudo-amicable.
For simplicity, assume that $p_1 = \#X(\F_{p_2})$.
We first apply quadratic Chabauty to $X$ for ${p_1}$ and $p_2$, keeping in mind
that only residue classes corresponding to affine multiples of $\tilde{P}$ have
to be considered.
Suppose that $d \in \TT$ and that $z\in X(\Q_{p_1})$ is a solution to $\rho_{p_1}(z) =
\log_{p_1} (d)$ which lifts to a rational point on $X$.
Then we have $z = aP$ for some $a \in\Z$. 
The discrete logarithm $a_1$ of $z \bmod {p_1}$ with respect to $\tilde{P}$ is then congruent
to $a$ modulo~${p_2}$.
From $a = \log_{P,p_2}(z) \bmod p_1$ we know the
residue disk $\DD_{z,p_2}$ of the reduction of $z$ modulo~$p_2$ (more generally, if we had
$p_1\mid \#X(\F_{p_2})$, then we would know that the reduction of $z$ modulo~$p_2$ lies in
one of $\#X(\F_{p_2})/p_1$ explicitly given residue disks).

We run through all solutions $w\in X(\Q_{p_2})$ to $\rho_{p_2}(w) = \log_{p_2}(d)$ in the residue disk $\DD_{z,p_2}$ and compute
\[a_2=\log_{P,p_2}(z) \bmod p_2.\]
Finally, we check whether $a_1=a_2$; if not, then we know that $w$ doesn't
correspond to an integral point. If this happens for all such $w$, then we can discard $z$.

\begin{example}\label{ex:elliptic}
  Let $X$ denote the elliptic curve
\[
  y^2 = x^3-4.
\]
Then $X$ has rank~1 and trivial torsion over $\Q$ and $P =(2,2)$ is a generator of
$X(\Q)$.
Using an implementation of the algorithm based on elliptic logarithms mentioned in
Section~\eqref{sec:previous} (for instance in {\tt Magma} or {\tt Sage}), we easily find that the only $\Z$-integral points on $X$
are $\pm P$ and $\pm Q$, where $Q = (5,11)$.
We want to recover this result using the techniques discussed above.
First we compute that~2 is the only very bad prime and that $\TT = \{0,1\}$
and $\#X(\F_v)$ for all primes $v<100$ of good and ordinary reduction.

We find that the pair $(13,7)$ is pseudo-amicable, since $\#\tilde{X}(\F_{13}) = 21$ and
$\#\tilde{X}(\F_{7}) = 13$.
Using the algorithms in Section~\ref{sec:computing} we compute all solutions to
$\rho_{13}(z) \in \{0,\log_{13}(2)\}$.
Since the order of the reduction $\tilde{P}$ of $P$ modulo~13 is~7, it suffices to consider 
the~6 residue disks corresponding to nontrivial multiples of $\tilde{P}$.
We then proceed as follows:
For every solution $z\notin \{\pm P, \pm Q\}$ to $\rho_{13}(z) = \log_{13}(d)$, where $d
\in \TT$, we assume that $z$ does correspond to a
rational point $a\cdot P\in X(\Q)$ and we  compute 
\begin{itemize}
  \item the discrete logarithm  $a'\in \F_7$ of $z \bmod 13$ with respect to $\tilde{P}$;
  \item the residue disk $\DD_{z,7}$ of $z$ mod~7, by computing 
$\int^z_{\infty}\omega\cdot\left(\int^P_{\infty}\omega\right)^{-1}$.
\end{itemize}
So we only have to check for all solutions $w \in \DD_{z,7}$ to $\rho_7(w) = \log_7(t)$,
whether 
\[a''=\int^w_{\infty}\omega\cdot\left(\int^P_{\infty}\omega\right)^{-1} \bmod 7\]
coincides with $a'$; if not, then we know that $z\notin X(\Q)$.

Using this approach, we succeeded in showing that indeed $\pm P, \pm Q$ are the only integral points on $X$.
\end{example}

More generally, we recall~\cite[Sec. 2, Definition]{Silverman-Stange} that an \emph{aliquot
cycle} of length $l$ for $X$ is a finite sequence of primes $\cycp=
(p_1,p_2,\ldots, p_l) $ with the property that $\#X(\F_{p_i}) = p_{i+1}
$, where $i$ is taken modulo $l$. We can similarly define
\emph{pseudo-aliquot cycles} by replacing the condition with the weaker one
asserting only that $p_{i+1}$ divides $\#X(\F_{p_i})$.

\begin{definition}
  Let $\cycp$ be a  pseudo-aliquot cycle of length $l$ for $X$. A \emph{lift}
  of $\cycp$ is a
  collection of points $z_i \in X(\Q_{p_i})$, for each $i=1,\ldots,l$,
  such that $\rho_{p_i}(z_i) = \log_{p_i}(d)$, and 
   such that the entire collection satisfies the 
   following conditions with $n_i = \log_{P,p_i}(z_i)\in \Q_{p_i}$:
  \begin{itemize}
  \item We have $n_i \in \Z_{p_i}$.
  \item  As $p_{i+1}$ divides $\#X(\F_{p_{i}})$, $n_{i+1} P$ is restricted to certain
  residue classes in $X(\Q_{p_{i}})$ and we assume that $z_{i}$
  sits in one such residue class.
  \end{itemize}
\end{definition}

Clearly an integral point $Q\in \UU(\Z)$ with intersection pattern $d\in\TT$ gives
rise to the lift of $\cycp$ where $z_i$ is the image of $Q$ in
$X(\Q_{p_i})$. We will say that $\cycp$ {\em disqualifies} $d$ if there
are no lifts of $\cycp$ other than the ones corresponding to points in
$\seta$. This then proves that there are no points in $\UU(\Z)-A$ with
intersection pattern $d$. If we manage to disqualify all
intersection patterns in $\TT$ we prove that $\seta=\UU(\Z)$.

We have not tried to carry this out for cycles of length $>2$ in practice.

\bibliographystyle{amsplain}
\bibliography{total}

\providecommand{\bysame}{\leavevmode\hbox to3em{\hrulefill}\thinspace}
\providecommand{\MR}{\relax\ifhmode\unskip\space\fi MR }
\providecommand{\MRhref}[2]{%
  \href{http://www.ams.org/mathscinet-getitem?mr=#1}{#2}
}
\providecommand{\href}[2]{#2}
\begin{thebibliography}{10}

\bibitem{BGGP:Finiteness}
Matthew~H. Baker, Enrique Gonz{\'a}lez-Jim{\'e}nez, Josep Gonz{\'a}lez, and
  Bjorn Poonen, \emph{Finiteness results for modular curves of genus at least
  2}, Amer. J. Math. \textbf{127} (2005), no.~6, 1325--1387. \MR{2183527
  (2006i:11065)}

\bibitem{balakrishnan:iterated}
Jennifer~S. Balakrishnan, \emph{Iterated {C}oleman integration for
  hyperelliptic curves}, ANTS-X: Proceedings of the Tenth Algorithmic Number
  Theory Symposium (E.\thinspace{}W. Howe and K.\thinspace{}S. Kedlaya, eds.),
  Open {B}ook {S}eries, vol.~1, Mathematical {S}ciences {P}ublishers, 2013,
  pp.~41--61.

\bibitem{Bes-Bal10}
Jennifer~S. Balakrishnan and Amnon Besser, \emph{Computing local $p$-adic
  height pairings on hyperelliptic curves}, IMRN \textbf{2012} (2012), no.~11,
  2405--2444.

\bibitem{Bes12}
\bysame, \emph{Coleman--{G}ross height pairings and the {$p$}-adic sigma
  function}, J. Reine Angew. Math. \textbf{698} (2015), 89--104.

\bibitem{BBM14}
Jennifer~S. Balakrishnan, Amnon Besser, and J.~Steffen M{\"u}ller,
  \emph{{Q}uadratic {C}habauty: {$p$}-adic heights and integral points on
  hyperelliptic curves}, J. Reine Angew. Math (2015), to appear.

\bibitem{BBK09}
Jennifer~S. Balakrishnan, Robert~W. Bradshaw, and Kiran~S. Kedlaya,
  \emph{Explicit {C}oleman integration for hyperelliptic curves}, Algorithmic
  number theory, Lecture Notes in Comput. Sci., vol. 6197, Springer, Berlin,
  2010, pp.~16--31. \MR{2721410 (2012b:14048)}

\bibitem{bdckw}
Jennifer~S. Balakrishnan, Ishai Dan-Cohen, Minhyong Kim, and Stefan Wewers,
  \emph{A non-abelian conjecture of {B}irch and {S}winnerton-{D}yer type for
  hyperbolic curves}, Preprint (2014), 1--38, \url{arxiv:1209.0640}.

\bibitem{BKK11}
Jennifer~S. Balakrishnan, Kiran~S. Kedlaya, and Minhyong Kim, \emph{Appendix
  and erratum to ``{M}assey products for elliptic curves of rank 1''}, J. Amer.
  Math. Soc. \textbf{24} (2011), no.~1, 281--291. \MR{2726605}

\bibitem{BaMuSt12}
Jennifer~S. Balakrishnan, J.~Steffen M\"{u}ller, and William Stein, \emph{A
  {$p$}-adic analogue of the conjecture of {B}irch and {S}winnerton-{D}yer for
  modular abelian varieties}, Math. Comp. (2015), to appear.

\bibitem{Bes00}
Amnon Besser, \emph{{$p$}-adic {A}rakelov theory}, J. Number Theory
  \textbf{111} (2005), no.~2, 318--371. \MR{MR2130113}

\bibitem{magma}
W.~Bosma, J.~Cannon, and C.~Playoust, \emph{The {M}agma algebra system. {I}.
  {T}he user language}, J. Symbolic Comput. \textbf{24} (1997), no.~3--4,
  235--265, Computational algebra and number theory (London, 1993). \MR{1 484
  478}

\bibitem{Bruin:EllipticChabauty}
Nils Bruin, \emph{Chabauty methods using elliptic curves}, J. Reine Angew.
  Math. \textbf{562} (2003), 27--49. \MR{2011330 (2004j:11051)}

\bibitem{Bruin-Elkies:trinomials}
Nils Bruin and Noam~D. Elkies, \emph{Trinomials {$ax^7+bx+c$} and {$ax^8+bx+c$}
  with {G}alois groups of order 168 and {$8\cdot168$}}, Algorithmic number
  theory ({S}ydney, 2002), Lecture Notes in Comput. Sci., vol. 2369, Springer,
  Berlin, 2002, pp.~172--188. \MR{2041082 (2005d:11094)}

\bibitem{Bruin-Stoll:Experiment}
Nils Bruin and Michael Stoll, \emph{Deciding existence of rational points on
  curves: an experiment}, Experiment. Math. \textbf{17} (2008), no.~2,
  181--189. \MR{2433884 (2009d:11100)}

\bibitem{Bruin-Stoll:Descent}
\bysame, \emph{Two-cover descent on hyperelliptic curves}, Math. Comp.
  \textbf{78} (2009), no.~268, 2347--2370. \MR{2521292 (2010e:11059)}

\bibitem{Bruin-Stoll:MWSieve}
\bysame, \emph{The {M}ordell-{W}eil sieve: proving non-existence of rational
  points on curves}, LMS J. Comput. Math. \textbf{13} (2010), 272--306.
  \MR{2685127 (2011j:11118)}

\bibitem{BMSST08}
Y.~Bugeaud, M.~Mignotte, S.~Siksek, M.~Stoll, and S.~Tengely, \emph{Integral
  points on hyperelliptic curves}, Algebra Number Theory \textbf{2} (2008),
  no.~8, 859--885. \MR{2457355 (2010b:11066)}

\bibitem{Chab41}
C.~Chabauty, \emph{Sur les points rationnels des courbes alg\'ebriques de genre
  sup\'erieur \`a l'unit\'e}, C. R. Acad. Sci. Paris \textbf{212} (1941),
  882--885. \MR{0004484 (3,14d)}

\bibitem{Col85a}
R.~Coleman, \emph{Effective {C}habauty}, Duke Math. J. \textbf{52} (1985),
  no.~3, 765--770. \MR{808103 (87f:11043)}

\bibitem{Col-Gro89}
R.~Coleman and B.~Gross, \emph{$p$-adic heights on curves}, Algebraic number
  theory (J.~Coates, R.~Greenberg, B.~Mazur, and I.~Satake, eds.), Advanced
  Studies in Pure Mathematics, vol.~17, Academic Press, Boston, MA, 1989,
  pp.~73--81. \MR{92d:11057}

\bibitem{David95}
Sinnou David, \emph{Minorations de formes lin\'eaires de logarithmes
  elliptiques}, M\'em. Soc. Math. France (N.S.) (1995), no.~62, iv+143.
  \MR{1385175 (98f:11078)}

\bibitem{Flynn:Coverings}
E.~Victor Flynn, \emph{Coverings of curves of genus 2}, Algorithmic number
  theory ({L}eiden, 2000), Lecture Notes in Comput. Sci., vol. 1838, Springer,
  Berlin, 2000, pp.~65--84. \MR{1850599 (2002f:11074)}

\bibitem{Flynn:Hasse}
\bysame, \emph{The {H}asse principle and the {B}rauer-{M}anin obstruction for
  curves}, Manuscripta Math. \textbf{115} (2004), no.~4, 437--466. \MR{2103661
  (2005j:11047)}

\bibitem{Flynn-Smart}
E.~Victor Flynn and Nigel~P. Smart, \emph{Canonical heights on the {J}acobians
  of curves of genus {$2$} and the infinite descent}, Acta Arith. \textbf{79}
  (1997), no.~4, 333--352. \MR{1450916 (98f:11066)}

\bibitem{Flynn-Wetherell:Challenge}
E.~Victor Flynn and Joseph~L. Wetherell, \emph{Covering collections and a
  challenge problem of {S}erre}, Acta Arith. \textbf{98} (2001), no.~2,
  197--205. \MR{1831612 (2002b:11088)}

\bibitem{Gross86}
B.~H. Gross, \emph{Local heights on curves}, Arithmetic geometry ({S}torrs,
  {C}onn., 1984), Springer, New York, 1986, pp.~327--339. \MR{MR861983}

\bibitem{HKK}
N.~Hirata-Kohno and T.~Kovacs, \emph{Computing {$S$}-integral points on
  elliptic curves of rank at least 3}, RIMS Kokyuroku \textbf{1898} (2014),
  92--102.

\bibitem{Kim05}
Minhyong Kim, \emph{The motivic fundamental group of
  {$\mathbb{P}^1-\{0,1,\infty\}$} and the theorem of {S}iegel}, Invent. Math.
  \textbf{161} (2005), no.~3, 629--656. \MR{2181717 (2006k:11119)}

\bibitem{kim10}
\bysame, \emph{Massey products for elliptic curves of rank 1}, J. Amer. Math.
  Soc. \textbf{23} (2010), no.~3, 725--747. \MR{2629986}

\bibitem{Lan88}
S.~Lang, \emph{Introduction to {A}rakelov theory}, Springer-Verlag, New York,
  1988. \MR{89m:11059}

\bibitem{PMC}
William McCallum and Bjorn Poonen, \emph{The method of {C}habauty and
  {C}oleman}, Explicit methods in number theory, Panor. Synth\`eses, vol.~36,
  Soc. Math. France, Paris, 2012, pp.~99--117. \MR{3098132}

\bibitem{Mueller-Stoll}
J.\thinspace{}Steffen M\"uller and Michael Stoll, \emph{Canonical heights on
  genus two {J}acobians}, In preparation, 2015.

\bibitem{penrose:inverse}
R.~Penrose, \emph{A generalized inverse for matrices}, Proc. Cambridge Philos.
  Soc. \textbf{51} (1955), 406--413. \MR{0069793 (16,1082a)}

\bibitem{PZGH99}
Attila Peth{\H{o}}, Horst~G. Zimmer, Josef Gebel, and Emanuel Herrmann,
  \emph{Computing all {$S$}-integral points on elliptic curves}, Math. Proc.
  Cambridge Philos. Soc. \textbf{127} (1999), no.~3, 383--402. \MR{1713117
  (2000f:11069)}

\bibitem{pohlig-hellman}
Stephen~C. Pohlig and Martin~E. Hellman, \emph{An improved algorithm for
  computing logarithms over {GF}$(p)$ and its cryprographic significance}, IEEE
  Trans. Information Theory \textbf{24} (1978), 106--110.

\bibitem{Poonen:Exp}
Bjorn Poonen, \emph{Heuristics for the {B}rauer-{M}anin obstruction for
  curves}, Experiment. Math. \textbf{15} (2006), no.~4, 415--420. \MR{2293593
  (2008d:11062)}

\bibitem{PSS:Twists}
Bjorn Poonen, Edward~F. Schaefer, and Michael Stoll, \emph{Twists of {$X(7)$}
  and primitive solutions to {$x^2+y^3=z^7$}}, Duke Math. J. \textbf{137}
  (2007), no.~1, 103--158. \MR{2309145 (2008i:11085)}

\bibitem{Scharaschkin:Thesis}
Victor Scharaschkin, \emph{Local-global problems and the {B}rauer-{M}anin
  obstruction}, ProQuest LLC, Ann Arbor, MI, 1999, Thesis (Ph.D.)--University
  of Michigan. \MR{2700328}

\bibitem{Silverman-Stange}
Joseph~H. Silverman and Katherine~E. Stange, \emph{Amicable pairs and aliquot
  cycles for elliptic curves}, Exp. Math. \textbf{20} (2011), no.~3, 329--357.
  \MR{2836257 (2012g:11109)}

\bibitem{Smart:S-integral}
Nigel~P. Smart, \emph{{$S$}-integral points on elliptic curves}, Math. Proc.
  Cambridge Philos. Soc. \textbf{116} (1994), no.~3, 391--399. \MR{1291748
  (95g:11050)}

\bibitem{Smart98}
\bysame, \emph{The algorithmic resolution of {D}iophantine equations}, London
  Mathematical Society Student Texts, vol.~41, Cambridge University Press,
  Cambridge, 1998. \MR{1689189 (2000c:11208)}

\bibitem{sage}
W.\thinspace{}A. Stein et~al., \emph{{S}age {M}athematics {S}oftware ({V}ersion
  6.5)}, The Sage Development Team, 2015, \url{http://www.sagemath.org}.

\bibitem{Stoll:H1}
Michael Stoll, \emph{On the height constant for curves of genus two}, Acta
  Arith. \textbf{90} (1999), no.~2, 183--201. \MR{1709054 (2000h:11069)}

\bibitem{stoll:2-descent}
\bysame, \emph{Implementing 2-descent for {J}acobians of hyperelliptic curves},
  Acta Arith. \textbf{98} (2001), 245--277.

\bibitem{Stoll:H2}
\bysame, \emph{On the height constant for curves of genus two. {II}}, Acta
  Arith. \textbf{104} (2002), 165--182.

\bibitem{Stoll:KummerG3}
\bysame, \emph{An explicit theory of heights for hyperelliptic jacobians of
  genus three}, Preprint (2014),
  \url{http://www.mathe2.uni-bayreuth.de/stoll/papers/Kummer-g3-hyp-2014-05-15.pdf}.

\bibitem{Stroeker-Tzanakis}
R.~J. Stroeker and N.~Tzanakis, \emph{Solving elliptic {D}iophantine equations
  by estimating linear forms in elliptic logarithms}, Acta Arith. \textbf{67}
  (1994), no.~2, 177--196. \MR{1291875 (95m:11056)}

\end{thebibliography}

\end{document}